\tikzset{
  labl/.style={anchor=south, rotate=90, inner sep=.5mm}
}
\tikzset{
  labr/.style={anchor=north, rotate=90, inner sep=1mm}
}
\DeclareMathOperator{\supp}{supp}
\newcommand{\field}[1]{\mathbb #1}
\newcommand{\R}{\field R}
\newcommand{\C}{\field C}
\newcommand{\Z}{\field Z}
\DeclareMathOperator{\spec}{Spec}
\renewcommand{\P}{\field P}
\newcommand{\Gal}{\operatorname{Gal}}
\newcommand{\ch}{\operatorname{char}}
\newcommand{\id}{\operatorname{id}}
\DeclareMathOperator{\Aut}{\operatorname{Aut}}
\newtheoremstyle{fauxsubsub}
  {3pt}
  {0pt}
  {}
  {0pt}
  {}
  {.}%
  {.5em}
  {}
\newtheorem{lem}{Lemma}[section]
\numberwithin{equation}{lem}
\newtheorem{thm}[lem]{Theorem}
\newtheorem*{mainthm}{Main Theorem}
\newtheorem*{corollary*}{Corollary}
\newtheorem{prop}[lem]{Proposition}
\theoremstyle{definition}
\newtheorem{defn}[lem]{Definition}
\theoremstyle{definition}
\newtheorem{remark}[lem]{Remark}
\newtheorem*{remark*}{Remark}
\newtheorem*{remarks*}{Remarks}
\newtheorem{ques}[lem]{Question}
\DeclareMathOperator{\PO}{PO}
\author{Yuya Sasaki}
\title{On real forms of Fermat hypersurfaces}
\begin{document}

\begin{abstract}
  In this paper, we compute the number of real forms of Fermat
  hypersurfaces for degree $d$ except the degree $4$ surface case,
  and give explicit descriptions of them. 
\end{abstract}

\maketitle

\section{Introduction}

A real form of a complex algebraic variety $X$ is a real algebraic
variety whose complexification is isomorphic to $X$. Now, some
examples of smooth complex varieties with infinitely many nonisomorphic
real forms are known. The first example was obtained by Lesieutre in
\cite{MR3773792} for varieties of dimension $6$, and the existence of
such varieties for any dimension $n \ge 2$ was shown in \cite{MR3934593}. 
Also, for dimension $1$, it is known that there are no such
varieties. In addition, the construction of smooth complex rational affine
surface with uncountably many nonisomorphic real forms, and some other
results about finiteness problem of real forms is summarized in
\cite{2105.08044}.

Considering the real forms of specific varieties is also an interesting
question. 
Fermat hypersurface of dimension $n$ and degree $d$ is a projective
variety defined as follows: 
\[
  F^n_{d (, k)} := \left( X_0^d + \cdots + X_n^d + X_{n + 1}^d = 0 \right)
  \subset \P_k^{n + 1}
\]
where $k$ is the base field, and we don't write it explicitly when it is
clear. In \cite{Ogu}, the following question is asked:

\begin{ques}
  Find all real forms of $F^n_{d, \C}$ up to $\R$-isomorphisms. 
\end{ques}

When $d = 1$ or $( n, d ) = ( 1, 2 )$, $F^n_d$ is isomorphic to
a projective space, and this has exactly $1$ real form if $n$ is even,
and $2$ real forms if $n$ is odd, up to isomorphisms, which is proved
in \cite{Ben16}. Also, it is proved in \cite{Rus02} that $F^2_2$ has
exactly $4$ real forms. Note that by \cite{MR181643}, for degree $2$,
$F^n_2$ has only finitely many real forms up to isomorphisms as the first
Galois cohomology of an affine linear algebraic group is finite. 

In this paper, we give the following partial answer to the above question:

\begin{mainthm}
  Suppose $( n, d ) \neq ( 2, 4 )$ and $d \ge 3$. Then, the number of
  real forms of $F^n_{d, \C}$ is
  \[
    \begin{dcases}
      \left\lfloor \frac{n}{2} \right\rfloor + 2 & ( d \equiv 1
      \pmod{2} ) \\
      \frac{( n + 3 ) ( n + 5 )}{8} & ( d \equiv 0,\ n \equiv 1
      \pmod{2} ) \\
      \frac{( n + 4 ) ( n + 6 )}{8} + 1 & ( d \equiv 0,\ n \equiv 0
      \pmod{2} )
    \end{dcases}.
  \]
  Also, the number of real forms of $F^n_{2, \C}$ is \[
    \begin{dcases}
      \frac{n + 1}{2} + 1 & ( n \equiv 1 \pmod{2} ) \\
      \frac{n}{2} + 3 & ( n \equiv 0 \pmod{2} )
    \end{dcases}.
  \]
\end{mainthm}

We prove this theorem by computing $H^1 ( \Gal ( \C / \R ), \Aut
F^n_d )$. In Section \ref{num_main}, we deal with the case $( n, d ) \neq
( 1, 3 ), ( 2, 4 )$, $d \ge 3$, in Section \ref{num_ell}, we deal
with the case $( n, d ) = ( 1, 3 )$, and in Section \ref{quad}, we deal
with the case $d = 2$. In every case, we have already known
the automorphism group $\Aut F^n_d$, as the first case is due to Shioda
\cite{Shi87}, the second is due to the fact that $F^1_3$ is elliptic,
and in the third case, $\Aut F^n_2 \cong \PO ( n + 2 )$. 
So we will check the condition for an automorphism of
$F^n_d$ to be a cocycle, compute a conjugation of a cocycle by an
automorphism, and get this result. 

We also give explicit descriptions of these real forms for the curve
case in Theorem \ref{des:curve}, and for the surface case in Theorem
\ref{des:surface} in the way we can generalize it to higher dimensions,
and give a topological description of a real locus of some real forms (in
Theorem \ref{rl:gen}, and in particular, Theorem \ref{rl:curve} for the
curve case). We also show that $F_4^2$ has at least $4$ non-isomorphic
real forms in Remark \ref{f2_4}.

This paper is a refined version of my master thesis \cite{S21} (in
Japanese, unpublished) under Professor Keiji Oguiso, together with a
correction of some mistakes in the preliminary version, which was kindly
pointed out by Professor J\'{a}nos Koll\'{a}r.

\section{Preliminary about real forms}

First, we review the definition of a real form and related concepts. 

\begin{defn}
  Let $X$ be a complex algebraic variety and $X'$ a real algebraic
  variety.
  \begin{enumerate}
    \item The complexification of $X'$ is the complex algebraic variety
      \[
        {X'}_\C := X' \times_{\spec \R} \spec \C.
      \]
    \item A real form of $X$ is a real algebraic variety $X_\R$ with
      a $\C$-isomorphism $( X_\R )_\C \xrightarrow[]{\sim} X$. 
    \item A real structure on $X$ is an anti-involution $\rho \colon
      X \to X$, i.e., an involution that makes the following diagram
      commute:
      \[
        \xymatrix{
          X \ar[d], \ar[r]^\rho \ar@{}[dr]|\circlearrowleft & X \ar[d] \\
          \spec \C \ar[r] & \spec \C
        }
      \]
      where the bottom arrow is the morphism induced by the complex
      conjugation. 
    \item Real structures $\rho$, $\rho'$ on $X$ are equivalent if
      there exists a $\phi \in \Aut_\C X$ such that $\rho' =
      \phi^{- 1} \circ \rho \circ \phi$. 
  \end{enumerate}
\end{defn}

\begin{thm}[\cite{MR181643}]
  Let $X$ be a complex quasi-projective variety with a real
  structure $\rho$. Then, there are bijections between 
  \begin{itemize}
    \item the set of $\R$-isomorphism classes of real forms of $X$,
    \item the set of equivalent classes of real structures on $X$, and
    \item $H^1 ( \Gal ( \C / \R ), \Aut_\C X )$ where the nontrivial
      element of $\Gal ( \C / \R )$ acts on $\Aut_\C X$ by the
      conjugation of $\rho$. 
  \end{itemize}
\end{thm}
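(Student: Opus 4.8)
The plan is to run the classical Galois-descent argument, realizing the theorem as a composite of two bijections:
$\{\text{real forms of }X\}/{\simeq_\R} \;\longleftrightarrow\; \{\text{real structures on }X\}/{\sim}$ and $\{\text{real structures on }X\}/{\sim} \;\longleftrightarrow\; H^1(\Gal(\C/\R),\Aut_\C X)$, where the middle set is equivalence classes of real structures as in the definition above.

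\emph{From real forms to real structures.} Given a real form, i.e.\ a real variety $X_\R$ together with a $\C$-isomorphism $f\colon (X_\R)_\C \simto X$, complex conjugation acts on the second factor of $(X_\R)_\C = X_\R\times_{\Spec\R}\Spec\C$ as an anti-involution, and transporting it through $f$ produces a real structure $\rho_f$ on $X$. Replacing $f$ by $\phi\circ f$ with $\phi\in\Aut_\C X$ replaces $\rho_f$ by $\phi\,\rho_f\,\phi^{-1}$, so the equivalence class of $\rho_f$ is an invariant of $X_\R$, and $\R$-isomorphic real forms plainly give equivalent real structures. For the inverse, given a real structure $\rho$ on $X$ one forms the quotient $X_\R := X/\langle\rho\rangle$; the essential input is that this Galois descent along $\Spec\C\to\Spec\R$ is \emph{effective} because $X$ is quasi-projective, so every $\langle\rho\rangle$-orbit of points lies in a $\rho$-stable affine open, the affine charts descend, and they glue. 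One obtains a quasi-projective $\R$-variety $X_\R$ with $(X_\R)_\C\simeq X$ compatibly with $\rho$ and conjugation; I would quote Weil's descent theory (as in \cite{MR181643}) for this rather than reprove it. A diagram chase then shows the two constructions are mutually inverse on isomorphism, resp.\ equivalence, classes.

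\emph{From real structures to cohomology.} Fix the given real structure $\rho$, write $\Gal(\C/\R)=\{1,\sigma\}$, and let $\sigma$ act on $G:=\Aut_\C X$ by ${}^\sigma\phi:=\rho\,\phi\,\rho^{-1}$; since $\rho$ is an involution this is an involutive automorphism of $G$, so the twisted action is well defined. If $\rho'$ is any real structure, then $c:=\rho'\rho^{-1}$ covers $\sigma\circ\sigma=\id_{\Spec\C}$, hence lies in $G$, and $\rho'=c\rho$. Expanding $(\rho')^2=(c\rho)^2 = c\,(\rho\, c\,\rho^{-1})\,\rho^2 = c\cdot{}^\sigma c$, one sees that $\rho'$ is a real structure exactly when $c\cdot{}^\sigma c=\id$, i.e.\ when $c\in Z^1(\Gal(\C/\R),G)$ — and for a group of order two this single identity is the entire cocycle condition. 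Thus $\rho'\mapsto\rho'\rho^{-1}$ is a bijection from real structures on $X$ onto $Z^1(\Gal(\C/\R),G)$. Finally, for $\phi\in G$ one computes $(\phi^{-1}\rho'\phi)\rho^{-1} = \phi^{-1}\,c\,{}^\sigma\phi$, which is precisely the coboundary relation, so the bijection descends to one between equivalence classes of real structures and $H^1(\Gal(\C/\R),G)$.

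Composing the two bijections yields the statement. The only step that is more than bookkeeping is the effectivity of descent in the first bijection — that a quasi-projective variety equipped with a real structure genuinely comes from an $\R$-form — which is exactly where quasi-projectivity (rather than mere separatedness) is used; everything else reduces to the two short identities $(c\rho)^2=c\cdot{}^\sigma c$ and $(\phi^{-1}\rho'\phi)\rho^{-1}=\phi^{-1}\,c\,{}^\sigma\phi$ together with routine functoriality checks.
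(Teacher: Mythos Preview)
Your argument is correct and follows the standard Galois-descent route; the two identities $(c\rho)^2=c\cdot{}^\sigma c$ and $(\phi^{-1}\rho'\phi)\rho^{-1}=\phi^{-1}c\,{}^\sigma\phi$ are exactly what is needed, and you are right to flag effectivity of descent (hence quasi-projectivity) as the one nontrivial input. The paper itself does not prove this theorem: it simply observes that the bijection between real structures and $H^1$ is ``clear by definition'' and cites \cite[Chapter~III,~1.3]{Se02} for the bijection between real forms and $H^1$, so your write-up is a fleshed-out version of the same argument the paper defers to references for.
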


It is clear by definition that there exists a bijection between the
second and the third set, and existence of a bijection between the first
and the third set is proved in \cite[Chapter III, 1.3]{Se02}.

\begin{remark}
  In the above condition,
  \begin{itemize}
    \item $\alpha \in \Aut_\C X$ is a cocycle if and only if $( \alpha
      \circ \rho )^2 = \id_X$. 
    \item cocycles $\alpha, \beta \in \Aut_\C X$ are equivalent if and
      only if there exists $\phi \in \Aut_\C X$ such that $\beta \circ
      \rho = \phi^{- 1} \circ ( \alpha \circ \rho ) \circ \phi$,
      and we denote it by $\alpha \sim \beta$. 
  \end{itemize}
\end{remark}

\section{The number of real forms of Fermat hypersurfaces}
\label{num_main}

In order to study real forms of Fermat hypersurfaces, we first
recall automorphisms of them. Clearly, there are two types of
automorphisms when the base field is algebraically closed and of
characteristic $0$. An automorphism of the first type is the one with
which we can identify $\sigma \in \mathfrak{S}_{n + 2}$ defined as follows: \[
  F^n_d \to F^n_d; [ X_0 : \cdots : X_{n + 1} ] \mapsto [
  X_{\sigma ( 0 )} : \cdots : X_{\sigma ( n + 1 )} ]
\]
with $\sigma$ viewed as a permutation of $\{ 0, \ldots, n + 1 \}$. 

An automorphism of the second type is the one denoted by $g ( i, j )$
for $i, j \in \Z$, $0 \le i \le n + 1$ and defined as follows: \[
  F^n_d \to F^n_d; [ X_0 : \cdots X_{n + 1} ] \mapsto [ X_0 :
  \cdots : \xi^j X_i : \cdots : X_{n + 1} ]
\]
with $\xi = \xi_d$ a primitive $d$-th root of unity. Also, we denote
the product $g ( 0, j_0 ) \cdot \cdots \cdot g ( n, j_n )$ by
$D \left( j_0, \ldots, j_n \right)$.

Let $G^n_d$ be the subgroup of $\Aut F^n_d$ generated by $g ( i, j )$
for all $i, j \in \Z$. Then, the following result is known: 

\begin{thm}[\cite{Shi87}]
  Let $k$ be an algebraically closed field with $\ch k = 0$, and suppose
  $( n, d ) \neq ( 1, 3 ), ( 2, 4 )$ and $d \ge 3$. Then, \[
    \Aut F^n_{d, k} = G^n_d \cdot \mathfrak{S}_{n + 2}
  \]
  where $\cdot$ denotes a semidirect product. 
\end{thm}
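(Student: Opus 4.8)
The plan is to prove the identity in two stages. First I would reduce to \emph{linear} automorphisms: under the stated hypotheses every element of $\Aut F^n_d$ is induced by an element of $\PGL_{n+2}(k)$ acting on $\P^{n+1}$. Then I would determine exactly which linear transformations preserve the Fermat equation and recognise the resulting group as $G^n_d\cdot\mathfrak S_{n+2}$. Throughout write $f=X_0^d+\cdots+X_{n+1}^d$; since $F^n_d$ is smooth of dimension $n\ge 1$, $f$ is irreducible, so the only degree-$d$ forms vanishing on $F^n_d$ are scalar multiples of $f$, and $F^n_d$ spans $\P^{n+1}$.

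For the reduction I would argue via the canonical class $K_{F^n_d}=\mathcal O(d-n-2)$ (adjunction), which is preserved by every automorphism, noting that $|\mathcal O(1)|$ cuts out the given embedding $F^n_d\hookrightarrow\P^{n+1}$, so that $\phi^*\mathcal O(1)\cong\mathcal O(1)$ forces $\phi$ to act linearly. If $n\ge 3$, the Grothendieck--Lefschetz theorem gives $\Pic F^n_d=\Z\cdot\mathcal O(1)$, and $\phi$ fixes the effective (ample) generator. If $n=2$, then $\Pic F^2_d$ is torsion-free ($F^2_d$ is simply connected and $h^1(\mathcal O)=0$), and for $d\ne 4$ one has $K=(d-4)\mathcal O(1)$ with $d-4\ne 0$, so $\phi^*K=K$ forces $\phi^*\mathcal O(1)=\mathcal O(1)$; the case $d=4$ is the excluded quartic K3 surface, where $K=\mathcal O$ says nothing. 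If $n=1$, then $K=\mathcal O(d-3)$: for $d=4$ this is $\mathcal O(1)$ itself, so automorphisms preserve it; for $d=3$ it is trivial (the excluded elliptic curve); and for $d\ge 5$ I would instead invoke the classical uniqueness of the plane model of a smooth plane curve of degree $\ge 4$ (equivalently, of its $g^2_d$). After this step $\Aut F^n_d$ is identified with the group of $[A]\in\PGL_{n+2}(k)$ preserving the subvariety $F^n_d$; the action on $\PGL$ is faithful since $F^n_d$ spans, and such an $A$ satisfies $f\circ A=\lambda f$ for some $\lambda\in k^\times$ since $f$ is irreducible of degree $d$.

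For the second stage I would use the Hessian. Since $\operatorname{Hess}(f)$ is the diagonal matrix with entries $d(d-1)X_i^{d-2}$,
\[
  \det\operatorname{Hess}(f)=\bigl(d(d-1)\bigr)^{n+2}\,(X_0\cdots X_{n+1})^{d-2},
\]
which is nonzero because $\ch k=0$. From $\operatorname{Hess}(f\circ A)=A^{\mathsf T}\bigl(\operatorname{Hess}(f)\circ A\bigr)A$ together with $f\circ A=\lambda f$ one gets $\lambda^{n+2}(X_0\cdots X_{n+1})^{d-2}=(\det A)^2\prod_i\ell_i(X)^{d-2}$, where $\ell_i$ is the linear form given by the $i$-th row of $A$. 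Since $d-2\ge 1$ — the sole place $d\ge 3$ enters — and $k[X_0,\dots,X_{n+1}]$ is a UFD, and the $\ell_i$ are pairwise non-proportional ($A$ invertible), the $\ell_i$ must be scalar multiples of a permutation of the $X_j$; hence $A=D\cdot P$ with $D$ diagonal and $P$ a permutation matrix. Substituting back into $f\circ A=\lambda f$ forces every diagonal entry of $D$ to have the same $d$-th power, so $D$ is a scalar times a diagonal matrix with entries in $\mu_d$, and therefore $[A]\in G^n_d\cdot\mathfrak S_{n+2}$; conversely every such class visibly fixes $f$.

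Finally I would check the product is semidirect: $\mathfrak S_{n+2}$ normalises $G^n_d$ (conjugating $g(i,j)$ by a permutation matrix yields $g(\sigma(i),j)$, still diagonal), and $G^n_d\cap\mathfrak S_{n+2}=\{1\}$ in $\PGL_{n+2}(k)$, because a permutation matrix proportional to a diagonal matrix is scalar. This yields $\Aut F^n_d=G^n_d\cdot\mathfrak S_{n+2}$. I expect the genuine difficulty to lie entirely in the first stage, the linearization of automorphisms: it is exactly the failure of the canonical class to pin down $\mathcal O(1)$ for the quartic K3 surface $(n,d)=(2,4)$ and the plane cubic $(n,d)=(1,3)$ — both of which carry non-linear automorphisms — that forces those exceptions. (Alternatively, one may simply invoke the theorem of Matsumura--Monsky, which performs this linearization uniformly.) The Hessian computation and the subsequent group theory are routine.
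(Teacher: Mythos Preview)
The paper does not actually prove this theorem: it is quoted from \cite{Shi87}, and the only argument the paper supplies is the one-line remark that ``the same argument works in higher dimensions, as we can show that every automorphism is linear.'' Your proposal therefore goes well beyond what the paper does, and it is correct. Your two-stage outline matches precisely the structure the paper alludes to: first reduce to linear automorphisms (this is the step the paper flags as the only new ingredient in passing from curves to hypersurfaces), then identify the linear stabiliser of $f$ as $G^n_d\cdot\mathfrak S_{n+2}$.

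A few comments on where your write-up differs in emphasis from what the paper hints at. For the linearisation step the paper gives no details at all; your case analysis via $K_{F^n_d}=\mathcal O(d-n-2)$, Grothendieck--Lefschetz for $n\ge 3$, torsion-freeness of $\Pic$ for $n=2$, and uniqueness of the $g^2_d$ for $n=1$, $d\ge 5$ is the standard route and explains transparently why $(1,3)$ and $(2,4)$ are excluded. Citing Matsumura--Monsky as a uniform alternative is also entirely appropriate and is what most readers would expect here. For the second stage, your Hessian argument is clean and is essentially the classical proof; it makes explicit why $d\ge 3$ is needed (so that $d-2\ge 1$ and the factorisation of $\det\operatorname{Hess}(f)$ carries information). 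The paper, by contrast, simply defers to Shioda for this computation. In short: your proof is sound, and it supplies exactly the content the paper chose to outsource to the reference.
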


In particular, $\Aut F^n_{d, \C} = G^n_d \cdot \mathfrak{S}_{n + 2}$. 

Though this theorem is proved only in the curve case in that paper, the
same argument works in higher dimensions, as we can show that every
automorphism is linear. 

From now on, we assume $( n, d ) \neq ( 1, 3), ( 2, 4 )$, $d \ge 3$, and
the base field is $\C$ unless otherwise stated. 

Let $\tilde{\rho} \colon \P^{n + 1} \to \P^{n + 1}$ be the complex
conjugation,
i.e., the $\R$-morphism defined by $\C [ X ] \to \C [ X ]$ that sends
$X_i$ to $X_i$ for all $i$ and $c \in \C$ to $\overline{c}$. This is a real
structure of $\P^{n + 1}$, and it induces the real structure $\rho \colon
F^n_d \to F^n_d$ on $F^n_d$. 

\begin{remark}
  The following relations hold for $D \left( j_0, \ldots, j_{n + 1}
  \right)$, $\sigma \in \mathfrak{S}_{n + 2}$, and $\rho$.
  \begin{align*}
    & \sigma \cdot D \left( j_0, \ldots, j_{n + 1} \right) \cdot
    \sigma^{- 1} = D \left( j_{\sigma ( 0 )}, \ldots, j_{\sigma ( n + 1 )}
    \right) \\
    & \sigma \cdot \rho = \rho \cdot \sigma \\
    & D \left( j_0, \ldots, j_{n + 1} \right) \cdot \rho = \rho \cdot
    D \left( - j_0, \ldots, - j_{n + 1} \right) = \rho \cdot D \left( j_0,
    \ldots, j_{n + 1} \right)^{- 1}.
  \end{align*}
\end{remark}

The following result is the main step in proving the main theorem.

\begin{prop}
  Let $( n, d ) \neq ( 1, 3 ), ( 2, 4)$, and $d \ge 3$. 
  \begin{enumerate}
    \item $\alpha = \mu \sigma \in \Aut F^n_d$, with $\mu = D \left( p_0,
      \ldots, p_{n + 1} \right) \in G_d^n$, $\sigma \in
      \mathfrak{S}_{n + 2}$,
      is a cocycle if and only if the following conditions are satisfied:
      \begin{itemize}
        \item $\sigma$ is of type $1^l \cdot 2^m$ with $l + 2 m = n + 2$.
        \item $p_i \equiv p_{\sigma ( i )} \pmod{d}$ for any $0 \le i
          \le n + 1$ if $\sigma$ has a
          fixed point, i.e., $\sigma ( k ) = k$ for some $k$. 
        \item $p_i - p_{\sigma ( i )} \equiv p_j - p_{\sigma ( j )}
          \equiv 0, \dfrac{d}{2} \pmod{d}$ for any $0 \le i, j \le
          n + 1$ if $\sigma$ has no fixed point, i.e., $\sigma$ is of
          type $2^{\frac{n + 2}{2}}$. 
      \end{itemize}
      \label{fh:cocycle}
    \item Cocycles $\beta = \lambda \tau$ with $\lambda \in G^n_d$ and
      $\tau \in \mathfrak{S}_{n + 2}$ and $\gamma$ are equivalent if and
      only if there exist $\psi \in \mathfrak{S}_{n + 2}$ and $r_j \in \Z$
      for $0 \le j \le n + 1$ with $r_j = r_{\sigma ( j )}$ for all
      $j$ and $r_l \equiv 0 \pmod{2}$ for all $l \notin \supp \tau$
      such that
      \[
        \gamma = \psi^{- 1} \cdot \left( D \left( r_0, \ldots, r_{n + 1}
        \right) \cdot \beta \right) \cdot \psi.
      \]
      Here, $\supp \tau$ is the support of a permutation $\tau$,
      i.e., $\supp \tau := \left\{ i | \tau ( i ) \neq i \right\}$.
      \label{fh:cocycle_equiv}
  \end{enumerate}
  \label{fh}
\end{prop}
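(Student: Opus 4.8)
The plan is to reduce both statements to manipulations inside $\Aut F^n_d = G^n_d \rtimes \mathfrak{S}_{n+2}$, using the Remark (that $\alpha$ is a cocycle iff $(\alpha\rho)^2 = \id_{F^n_d}$, and that cocycles $\beta,\gamma$ are equivalent iff $\gamma\rho = \phi^{-1}(\beta\rho)\phi$ for some $\phi$), together with the three relations recorded just before the proposition. I will use those relations in the forms $\rho\mu = \mu^{-1}\rho$ for $\mu \in G^n_d$, $\rho\sigma = \sigma\rho$ for $\sigma \in \mathfrak{S}_{n+2}$, and $\rho^2 = \id$, together with the following description of $G^n_d$: since rescaling all homogeneous coordinates by a common unit acts trivially on $\P^{n+1}$, one has $G^n_d \cong (\Z/d)^{n+2}/\langle(1,\ldots,1)\rangle$, so $D(q_0,\ldots,q_{n+1}) = \id$ if and only if $q_0 \equiv q_1 \equiv \cdots \equiv q_{n+1} \pmod{d}$.

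\emph{Part (1).} First I would compute $(\mu\sigma\rho)^2$. Pushing both copies of $\rho$ to the right with the relations above and cancelling $\rho^2$ yields
\[
  (\mu\sigma\rho)^2 \;=\; \mu\,(\sigma\mu^{-1}\sigma^{-1})\,\sigma^2 \;=\; D\bigl(p_0 - p_{\sigma(0)},\,\ldots,\,p_{n+1} - p_{\sigma(n+1)}\bigr)\cdot\sigma^2 .
\]
Since $G^n_d \cap \mathfrak{S}_{n+2} = \{\id\}$ in the semidirect product, this is $\id$ exactly when $\sigma^2 = \id$ and $D(p_0 - p_{\sigma(0)},\ldots,p_{n+1}-p_{\sigma(n+1)}) = \id$. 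The first condition says precisely that $\sigma$ has cycle type $1^l\cdot 2^m$. The second, by the description of $G^n_d$, says $p_i - p_{\sigma(i)} \equiv c \pmod d$ for all $i$ and a single constant $c$: if $\sigma$ fixes some $k$ then $c \equiv p_k - p_k = 0$, giving the first bulleted condition; if $\sigma$ is fixed-point-free then each transposition $(i\ \sigma(i))$ forces both $c \equiv p_i - p_{\sigma(i)}$ and $c \equiv p_{\sigma(i)} - p_i \equiv -c$, hence $2c \equiv 0$, i.e.\ $c \in \{0,\tfrac d2\}$, and $p_i - p_{\sigma(i)} \equiv c$ for every $i$ is the second bulleted condition.

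\emph{Part (2).} Write an arbitrary $\phi \in \Aut F^n_d$ as $\phi = \nu\psi$ with $\nu = D(s_0,\ldots,s_{n+1})$ and $\psi \in \mathfrak{S}_{n+2}$, and rewrite $\gamma\rho = \phi^{-1}(\beta\rho)\phi$ as $\gamma = \phi^{-1}(\beta\rho)\phi\rho$. Expanding with the same relations (move every $\rho$ to the right, cancel $\rho^2$, and use that $G^n_d$ is abelian) gives
\[
  \gamma \;=\; \psi^{-1}\bigl(\nu^{-1}\lambda\,(\tau\nu^{-1}\tau^{-1})\,\tau\bigr)\psi \;=\; \psi^{-1}\bigl(D(r_0,\ldots,r_{n+1})\cdot\beta\bigr)\psi, \qquad r_i = -(s_i + s_{\tau(i)}),
\]
using $\tau\nu^{-1}\tau^{-1} = D(-s_{\tau(0)},\ldots,-s_{\tau(n+1)})$. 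Running this computation in both directions, $\beta \sim \gamma$ holds iff $\gamma$ has this shape for some $\psi$ and some tuple $(r_i)$ of the form $r_i = -(s_i + s_{\tau(i)})$, $s_i \in \Z$ (and then $\gamma$ is automatically a cocycle, being $\phi^{-1}(\beta\rho)\phi\cdot\rho$ with $(\beta\rho)^2 = \id$). It remains to identify which tuples $(r_i)$ occur. As $\beta$ is a cocycle, Part (1) makes $\tau$ an involution, so $\tau^2(i) = i$ and hence $r_{\tau(i)} = -(s_{\tau(i)} + s_i) = r_i$ for all $i$; and if $l \notin \supp\tau$ then $\tau(l) = l$, so $r_l = -2s_l$ is even. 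Conversely, any $(r_i)$ with $r_i = r_{\tau(i)}$ for all $i$ and $r_l$ even for all $l \notin \supp\tau$ is realized by taking $s_l$ with $2s_l \equiv -r_l \pmod d$ on each fixed point $l$ and $s_i = 0$, $s_{\tau(i)} = -r_i$ on each transposition $\{i,\tau(i)\}$. This is exactly the description in the statement (where the $\sigma$ appearing in ``$r_j = r_{\sigma(j)}$'' is to be read as $\tau$).

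The calculations are routine once set up; the parts needing care are (i) invoking $G^n_d \cap \mathfrak{S}_{n+2} = \{\id\}$ to split $(\mu\sigma\rho)^2 = \id$ into the independent conditions on $\sigma$ and on $(p_i)$, (ii) the precise structure of $G^n_d$, responsible both for the single constant $c$ and for the alternative $c \in \{0,\tfrac d2\}$, and (iii) in Part (2), computing the image of $(s_i) \mapsto (-(s_i + s_{\tau(i)}))_i$, where it is essential that $\tau$ be an involution (from Part (1)) and that $r_l$ be even on the fixed points of $\tau$.
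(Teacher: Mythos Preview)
Your proof is correct and follows essentially the same route as the paper: both parts are handled by the same direct computation inside $G^n_d \rtimes \mathfrak{S}_{n+2}$, pushing $\rho$ to the right via the recorded relations and then reading off the conditions from the structure of $G^n_d \cong (\Z/d)^{n+2}/\langle(1,\dots,1)\rangle$. In Part~(2) you are in fact more explicit than the paper, which stops at the identity $\phi^{-1}(\beta\rho)\phi = \psi^{-1}\bigl(D(q_0+q_{\tau(0)},\dots)^{-1}\beta\bigr)\psi\rho$ and simply asserts ``as $q_i$'s vary over all integers, the result follows,'' whereas you spell out the image of $(s_i)\mapsto(-(s_i+s_{\tau(i)}))$ and correctly flag that the ``$\sigma$'' in the condition $r_j=r_{\sigma(j)}$ should read $\tau$.
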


\begin{proof}
  (\ref{fh:cocycle}) By definition, $\alpha$ is a cocycle if and only if
  $( \mu \sigma \rho )^2 = \id$. Also, 
  \begin{align*}
    ( \mu \sigma \rho )^2 & = \mu \sigma \rho \mu \sigma \rho = \mu \sigma
    \mu^{- 1} \sigma \\
    & = D \left( p_0, \ldots, p_{n + 1} \right) \cdot D \left( - p_{\sigma
    ( 0 )}, \ldots, - p_{\sigma ( n + 1 )} \right) \cdot \sigma^2
    \\
    & = D \left( p_0 - p_{\sigma ( 0 )}, \ldots, p_{n + 1} - p_{\sigma
    ( n + 1 )} \right) \cdot \sigma^2
  \end{align*}
  Therefore, $( \mu \sigma \rho )^2 = \id$ is equivalent to $( \mu \sigma
  \rho )^2 = D \left( N, \ldots, N \right)$ for some $N \in \Z$. 

  When
  $\sigma ( i ) = i$, as $p_i - p_{\sigma ( i )} = 0$, $( \mu \sigma
  \rho )^2 = \id$ is equivalent
  to $\sigma^2 = \id$ and $p_j - p_{\sigma ( j )} \equiv 0 \pmod{d}$ for
  any $j$. 

  When $\sigma$ has no fixed point, $( \mu \sigma \rho )^2 = \id$ is
  equivalent to $\sigma^2 = \id$
  and $p_0 - p_{\sigma ( 0 )} \equiv \cdots \equiv p_{n + 1} - p_{\sigma
  ( n + 1 )} \pmod{d}$. In this case, for $0 \le i \le n + 1$, \[
    p_i - p_{\sigma ( i )} \equiv p_{\sigma ( i )} - p_{\sigma^2 ( i )}
    \equiv p_{\sigma ( i )} - p_i \pmod{d}
  \]
  as $\sigma^2 = \id$. Therefore, in this case, $p_i - p_{\sigma ( i )}
  \equiv 0, \frac{d}{2}$ as $\sigma ( i ) \neq i$, and this is exactly
  what we want to show. 

  (\ref{fh:cocycle_equiv}) Suppose $\beta = \lambda \tau$ be a cocycle
  and $\phi = \nu \psi \in \Aut F^n_d$ with $\lambda, \nu = D \left( q_0,
  \ldots, q_{n + 1} \right) \in G^n_d,\ \tau, \psi \in
  \mathfrak{S}_{n + 2}$.
  Then, 
  \begin{align*}
    & \phi^{- 1} \cdot \beta \rho \cdot \phi \\
    = & \left( \psi^{- 1} \nu^{- 1} \right) \cdot \left( \lambda \tau
    \rho \right) \cdot \left( \nu \psi \right) \\
    = & \psi^{- 1} \cdot \left( \nu^{- 1} \lambda \tau \nu^{- 1} \right)
    \cdot \psi \rho \\
    = & \psi^{- 1} \cdot \left( \nu^{- 1} \lambda \cdot D \left( - q_{\tau
    ( 0 )}, \ldots, - q_{\tau ( n + 1 )} \right) \tau \right) \cdot
    \psi \rho \\
    = & \psi^{- 1} \cdot \left( D \left( q_0 + q_{\tau ( 0 )}, \ldots,
    q_{n + 1} + q_{\tau ( n + 1 )} \right)^{- 1} \lambda \tau \right)
    \cdot \psi \rho
  \end{align*}
  As $q_i$'s vary for all integers, the result follows. 
\end{proof}

\begin{remark}
  By Proposition \ref{fh}, any cocycles in $\Aut F^n_d$ are equivalent
  to one of the following forms: \[
    \begin{pmatrix}
      \xi^{a_0} & & \\
      & \ddots & \\
      & & \xi^{a_{n + 1}}
    \end{pmatrix}
    \cdot
    \begin{pmatrix}
      I_r & & & & & \\
      & 0 & 1 & & & \\
      & 1 & 0 & & & \\
      & & & \ddots & & \\
      & & & & 0 & 1 \\
      & & & & 1 & 0
    \end{pmatrix}
  \]
  with $I_r$ the identity matrix of size $r$, $a_i \in \Z$, and $0 \le r
  \le n + 2$. Throughtout this paper, blank entries in matrices mean $0$.
  Also, as cocycles, \[
    I_r = \xi^m I_r
  \]
  for $m \in \Z$ by definition, and in $\Aut \left( F_d^n \right)$ \[
    \begin{pmatrix}
      1 & \\ & A
    \end{pmatrix} \sim
    \begin{pmatrix}
      \xi^{2 i} & \\ & A
    \end{pmatrix} ,\
    \begin{pmatrix}
      B & & \\ & 0 & 1 \\ & 1 & 0
    \end{pmatrix} \sim
    \begin{pmatrix}
      B & & \\ & 0 & \xi^j \\ & \xi^j & 0
    \end{pmatrix}
  \]
  for $i, j \in \Z$ and any matrices $A, B$ by Proposition \ref{fh}. 
  \label{equiv_comp}
\end{remark}

Now, we can describe $H^1 ( \Gal ( \C / \R ), \Aut F^n_d )$. 

\begin{thm}
  Suppose $d$ is odd. Then, any cocycles in $\Aut F^n_d$ are equivalent
  to exactly one of the following forms:
   \[
    H^r_n :=
    \begin{pmatrix}
      I_r & & & & & \\ & 0 & 1 & & & \\ & 1 & 0 & & & \\ & & & \ddots
      & & \\ & & & & 0 & 1 \\ & & & & 1 & 0
    \end{pmatrix}
    \ ( r \in \Z,\ 0 \le r \le n + 2 ). 
  \]
  In particular, $F^n_d$ has exactly $\left\lfloor \frac{n}{2}
  \right\rfloor + 2$ real forms. 
  \label{odd_d}
\end{thm}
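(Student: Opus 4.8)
The plan is to first bring an arbitrary cocycle into the displayed normal form using the relations collected in Remark~\ref{equiv_comp}, and then to separate the $H^r_n$ from one another by the cycle type of their underlying permutation. The hypothesis that $d$ is odd will be used in exactly two ways: it deletes the alternative ``$\equiv\tfrac d2$'' in Proposition~\ref{fh}(\ref{fh:cocycle}), and it makes $2$ invertible modulo $d$.

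For the normalization I would start from Remark~\ref{equiv_comp}: every cocycle is equivalent to one of the form $D(a_0,\ldots,a_{n+1})\cdot P_\sigma$, where $P_\sigma$ is the permutation matrix of the standard permutation $\sigma$ of type $1^r\cdot 2^m$ with $r+2m=n+2$. Since $\tfrac d2\notin\Z$, Proposition~\ref{fh}(\ref{fh:cocycle}) forces $a_i\equiv a_{\sigma(i)}\pmod d$ on every orbit of $\sigma$. Consequently, in each transposition block the two off-diagonal exponents agree, so that block may be normalized to have all nonzero entries equal to $1$ by the second relation of Remark~\ref{equiv_comp}; and at each fixed point $l$ the exponent satisfies $a_l\equiv 2k\pmod d$ for some $k$ (because $\gcd(2,d)=1$), so it may be normalized to $1$ by the first relation of Remark~\ref{equiv_comp}. (Equivalently, one invokes Proposition~\ref{fh}(\ref{fh:cocycle_equiv}) with exponents $r_j\equiv -a_j\pmod d$, chosen even on the fixed points by the Chinese Remainder Theorem.) Hence every cocycle is equivalent to some $H^r_n$; reading off block sizes, the admissible values are precisely $0\le r\le n+2$ with $r\equiv n\pmod 2$, and each such $H^r_n$ is a cocycle by Proposition~\ref{fh}(\ref{fh:cocycle}).

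Next I would show the $H^r_n$ are pairwise inequivalent, the key point being that the conjugacy class in $\mathfrak S_{n+2}$ of the permutation part of a cocycle is an invariant of its equivalence class. This follows from the computation already made in the proof of Proposition~\ref{fh}(\ref{fh:cocycle_equiv}): passing to an equivalent cocycle only multiplies the $G^n_d$-part by an element of the abelian group $G^n_d$ and conjugates the $\mathfrak S_{n+2}$-part by some $\psi\in\mathfrak S_{n+2}$, and since $\Aut F^n_d=G^n_d\cdot\mathfrak S_{n+2}$ is a semidirect product the projection to $\mathfrak S_{n+2}$ is well defined, so its cycle type can only be altered by conjugation. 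The permutation part of $H^r_n$ has cycle type $1^r\cdot 2^{(n+2-r)/2}$, and distinct admissible $r$ give distinct cycle types; therefore $H^r_n\sim H^{r'}_n$ forces $r=r'$. Finally, counting the admissible $r$ (namely $n+2,n,\ldots$ down to $0$ or $1$) gives $\tfrac n2+2$ of them for $n$ even and $\tfrac{n+1}{2}+1$ for $n$ odd, i.e.\ $\lfloor n/2\rfloor+2$ in both cases, which by \cite{MR181643} is the number of real forms of $F^n_{d,\C}$.

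The only step that is not pure bookkeeping is the invariance claim in the inequivalence argument, where one must use the semidirect-product structure of $\Aut F^n_d$ to see that the cycle type of the permutation part really descends to equivalence classes; the normalization step is easy once one observes that oddness of $d$ simultaneously discards the $\tfrac d2$ case and converts the parity constraint on the correcting exponents into a congruence that is always solvable.
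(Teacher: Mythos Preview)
Your proof is correct and follows essentially the same route as the paper. The paper compresses the argument into a single sentence---``cocycles $\mu\sigma$, $\lambda\tau$ are equivalent if and only if the types of $\sigma$ and $\tau$ agree''---whereas you have separated this into a normalization step (using oddness of $d$ to kill both the $d/2$ alternative in Proposition~\ref{fh}(\ref{fh:cocycle}) and the parity obstruction in Remark~\ref{equiv_comp}) and an inequivalence step (using the semidirect product structure to read off the conjugacy class of the permutation part), but the content is the same.
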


In particular, if $d \ge 3$ is odd, then $F^1_d$ ($d \neq 3$) has
exactly $2$ real forms and $F^2_d$ has $3$ real forms.

\begin{proof}
  As $d$ is odd, \[
    \begin{pmatrix}
      1 & \\ & A
    \end{pmatrix} \sim
    \begin{pmatrix}
      \xi^i & \\ & A
    \end{pmatrix}
  \]
  for all $i$ and matrices $A$. Therefore, when $d$ is odd, cocycles
  $\mu \sigma$,
  $\lambda \tau$ with $\mu, \lambda \in G_d^n$ and $\sigma, \tau \in
  \mathfrak{S}_{n + 2}$ are equivalent if and only if the types of
  $\sigma$ and $\tau$ agree by Proposition \ref{fh} and Remark
  \ref{equiv_comp}.  Therefore, \[
    H^1 ( \Gal ( \C / \R ), \Aut F^n_d ) = \left\{ H^r_n | 0 \le r \le
    n + 2 \right\}
  \]
  where we identify cocycles with their equivalent classes, and
  the statement follows. 
\end{proof}

\begin{thm}
  Suppose $d$ is even. Then, any cocycles in $\Aut F^n_d$ are equivalent
  to exactly one of the forms: \[
    K^{s, t}_n :=
    \begin{pmatrix}
      \xi I_s & & & & & & \\ & I_t & & & & & \\ & & 0 & 1 & & & \\
      & & 1 & 0 & & & \\ & & & & \ddots & & \\ & & & & & 0 & 1 \\
      & & & & & 1 & 0
    \end{pmatrix}
    \ ( s, t \in \Z_{\ge 0},\ s + t \le n + 2 )
  \]
  \[
    L_n :=
    \begin{pmatrix}
      0 & 1 & & & \\
      - 1 & 0 & & & \\
      & & \ddots & & \\
      & & & 0 & 1 \\
      & & & - 1 & 0
    \end{pmatrix}
    \ ( n \equiv 0 \pmod{2} )
  \]
  Also, $F^n_d$ has exactly $\frac{( n + 3 ) ( n + 5 )}{8}$ real forms
  for $n$ odd, and $\frac{( n + 4 ) ( n + 6 )}{8} + 1$ real forms for $n$
  even.
  \label{even_d}
\end{thm}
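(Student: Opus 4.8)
The plan is to follow the method of the proof of Theorem~\ref{odd_d}: use the cocycle condition and the equivalence criterion of Proposition~\ref{fh} together with Remark~\ref{equiv_comp} to put every cocycle in a normal form, and then separate the normal forms by invariants. The one genuinely new feature for even $d$ is that the scalar $\xi$ is trivial in $\PGL_{n+2}(\C)\supseteq\Aut F^n_d$: multiplying a monomial-matrix cocycle by $\xi$ and then renormalizing each transposition block back to $\left(\begin{smallmatrix}0&1\\1&0\end{smallmatrix}\right)$ by Remark~\ref{equiv_comp} has the net effect of adding $1$ to every diagonal ``fixed-point'' exponent. In particular $K^{s,t}_n\sim K^{t,s}_n$ for all $s,t$, which is exactly what produces the denominator $8$ (rather than $4$) in the stated formulas.

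First I would run the reduction. By Remark~\ref{equiv_comp} every cocycle is equivalent to $D(a_0,\dots,a_{n+1})\cdot P_\tau$ where $P_\tau$ is the permutation matrix of a product $\tau$ of $\tfrac12(n+2-r)$ disjoint transpositions, $0\le r\le n+2$, $r\equiv n\pmod 2$. If $r\ge 1$, then $\tau$ has a fixed point, so the cocycle condition of Proposition~\ref{fh} forces $a_i\equiv a_{\tau(i)}\pmod d$ for all $i$; hence every transposition block is symmetric and normalizes to $\left(\begin{smallmatrix}0&1\\1&0\end{smallmatrix}\right)$, while on the $r$ fixed-point entries the available moves (permute them; change each exponent by an even amount, the $r_l$'s being even in the equivalence criterion of Proposition~\ref{fh}; add the common $1$ above) bring the matrix to $K^{s,t}_n$ with $s+t=r$. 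If $r=0$ (possible only for $n$ even), the cocycle condition says the differences $a_i-a_{\tau(i)}$ all equal a single value $\delta\in\{0,\tfrac d2\}$; for $\delta=0$ one gets $K^{0,0}_n$, and for $\delta=\tfrac d2$ every block equals $\xi^{c}\left(\begin{smallmatrix}0&-1\\1&0\end{smallmatrix}\right)$, and normalizing these scalars block-by-block (legitimate, each block being one $\tau$-orbit) yields exactly $L_n$. Thus every cocycle is equivalent to one on the list.

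Next, the inequivalences. The permutation part of a cocycle is well-defined (if $\mu\sigma=\lambda\tau$ with $\mu,\lambda$ diagonal then $\lambda^{-1}\mu=\tau\sigma^{-1}$ is simultaneously diagonal and a permutation matrix, forcing $\sigma=\tau$), and by the equivalence criterion of Proposition~\ref{fh} it gets replaced by a conjugate under $\sim$; so its cycle type --- in particular $r$, hence $s+t$ and the number of blocks --- is a $\sim$-invariant. This separates the $K^{s,t}_n$ with distinct $s+t$ and separates each $K^{s,t}_n$ with $s+t\ge 1$ from $L_n$. For forms with the same $s+t$: tracking exponents through the equivalence criterion, and using that $r_l$ is even at every fixed point, shows the multiset $\{a_i\bmod 2:\tau(i)=i\}$, taken up to flipping all entries simultaneously (the flip being the scalar-$\xi$ move), is a $\sim$-invariant; for $K^{s,t}_n$ it is $\{1^{\,s},0^{\,t}\}$, so $\{s,t\}$ is recovered, and with $K^{s,t}_n\sim K^{t,s}_n$ we get $K^{s,t}_n\sim K^{s',t'}_n\iff\{s,t\}=\{s',t'\}$. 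It remains to see $K^{0,0}_n\not\sim L_n$, and here I would use the real locus: for $\alpha=K^{0,0}_n$ one exhibits a fixed point of the anti-holomorphic involution $\alpha\circ\rho$ on $F^n_d$ (e.g. $[\,1:1:\zeta:\zeta^{-1}:0:\cdots:0\,]$ with $\zeta=e^{\pi\sqrt{-1}/d}$ for $n\ge 2$), while a fixed point of $L_n\circ\rho$ would force a relation $|\lambda|^2=-1$, so $L_n\circ\rho$ acts freely and the corresponding real form has no $\R$-point. Since having an $\R$-point is an $\R$-isomorphism invariant of a real form, $K^{0,0}_n\not\sim L_n$.

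Finally, to count: the classes are represented by the $K^{s,t}_n$ with $0\le s\le t$, $s+t\le n+2$, $s+t\equiv n\pmod 2$, together with $L_n$ when $n$ is even. For a fixed admissible $k=s+t$ there are $\lfloor k/2\rfloor+1$ pairs with $s\le t$, and summing over the admissible $k$ (odd $k$ in $[1,n+2]$ for $n$ odd, even $k$ in $[0,n+2]$ for $n$ even) gives $\sum_{i=1}^{(n+3)/2}i=\frac{(n+3)(n+5)}{8}$ for $n$ odd and $\sum_{i=1}^{(n+4)/2}i=\frac{(n+4)(n+6)}{8}$ for $n$ even; adding $1$ for $L_n$ in the even case gives the claimed numbers. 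The step I expect to be the real obstacle is $K^{0,0}_n\not\sim L_n$: every other comparison is combinatorial (cycle type plus parities of exponents), whereas separating $K^{0,0}_n$ from $L_n$ needs a genuinely geometric invariant, so the fixed-point computation for $L_n\circ\rho$ on the Fermat equation --- showing that the real locus is empty --- must be done carefully. A secondary point to pin down is that the global scalar $\xi$ yields precisely the relation $K^{s,t}_n\sim K^{t,s}_n$ and nothing further, since this is exactly what fixes the denominator $8$.
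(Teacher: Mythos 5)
Your proposal is correct and follows essentially the same route as the paper: reduce every cocycle to a block normal form via Proposition~\ref{fh} and Remark~\ref{equiv_comp}, observe that the conjugacy class of the permutation part and the multiset of diagonal exponents mod $2$ up to a simultaneous flip form a complete invariant, and count. The only genuine divergence is your separation of $K^{0,0}_n$ from $L_n$ by comparing real loci; this is valid (and your fixed-point computations check out), but it is not actually the obstacle you fear, since the common value $\delta\in\{0,\tfrac d2\}$ of $p_i-p_{\tau(i)}\bmod d$ is preserved by every move allowed in Proposition~\ref{fh}(2) (twists with $r_i=r_{\tau(i)}$, conjugation by permutations, and global scalars all leave these differences unchanged), so the purely combinatorial criterion already distinguishes the two classes --- which is what the paper implicitly relies on.
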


In particular, if $d \ge 3$ is even, then $F^1_d$ has exactly $3$
real forms and $F^2_d$ ($d \not= 4$) has $7$ real forms. 

\begin{proof}
  By Proposition \ref{fh} and Remark \ref{equiv_comp}, any cocycles
  are equivalent to $L_n$ or \[
    d_r ( a_1, \ldots, a_r ) :=
    \begin{pmatrix}
      \xi^{a_1} & & & & & & & \\
      & \ddots & & & & & & \\
      & & \xi^{a_r} & & & & & \\
      & & & 0 & 1 & & & \\
      & & & 1 & 0 & & & \\
      & & & & & \ddots & & \\
      & & & & & & 0 & 1 \\
      & & & & & & 1 & 0
    \end{pmatrix}
  \]
  with $a_i = 0, 1$ for each $i$. Then, by Proposition \ref{fh},
  $\alpha := d_r ( a_1, \ldots, a_r )$ is equivalent to $\beta :=
  d_r ( b_1, \ldots, b_r )$ if and only if $\beta \sigma^{- 1}
  \alpha^{- 1} \sigma = d_r ( c_1, \ldots, c_r )$ with $\sigma \in
  \mathfrak{S}_{n + 2}$ and $c_i \equiv c_j \pmod{2}$ for all $i, j$,
  which is equivalent to say that $n ( \alpha ) = n ( \beta )$ or $n (
  \alpha ) + n ( \beta ) = r$ where $n ( \alpha ) = \# \left\{ i |
  a_i = 1 \right\}$. Also, $d_r ( a_1, \ldots, a_r )$ and $d_s ( b_1,
  \ldots, b_s )$ are not equivalent if $r \neq s$ by Propositon \ref{fh}.
  Therefore, \[
    H^1 ( \Gal ( \C / \R ), \Aut F^n_d ) \setminus \{ L_n \} =
    \left\{ K^{s, t}_n | s + t \le n + 2,\ 0 \le s \le t \right\}
  \]
  where we identify cocycles with their equivalent classes. 
  From this, when $n = 2 k - 1$ for $k \in \Z$, the number of real forms
  is \[
    \sum_{i = 0}^k ( i + 1 ) = \frac{( k + 1 ) ( k + 2 )}{2} = \frac{(
    n + 3 ) ( n + 5 )}{8}, 
  \]
  and when $n = 2 k$ for $k \in \Z$, the number of real forms is \[
    1 + \sum_{i = 0}^{k + 1} ( i + 1 ) = 1 + \frac{( k + 2 ) ( k + 3 )}{2} =
    \frac{( n + 4 ) ( n + 6 )}{8} + 1,
  \]
  so the result follows. 
\end{proof}

\section{The elliptic curve case}
\label{num_ell}

In this section, we consider the real forms of $F^1_3 = ( X_0^3 + X_1^3
+ X_2^3 = 0 ) \subset \P^2$. First, the substitution \[
  X = \frac{-12 X_2}{X_0 + X_1},\ Y = \frac{36 ( X_0 - X_1 )}{X_0 + X_1}
\]
gives an elliptic curve $Y^2 = X^3 - 432$ with $j$-invariant $0$, and
this can be identified with $E := \C / ( \Z + \zeta \Z )$ where $\zeta$
is a primitive $6$-th root of unity, which we fix in this section. 

Then, any automorphisms of $E$ can be expressed by \[
  z \mapsto \zeta^i z + a
\]
with $i \in \Z$ and $a$ in the fundamental domain (cf. \cite{Sil09}). 
Note that the real structure $\rho \colon F^1_3 \to F^1_3$ corresponds
to $E \to E$ that maps $z$ to $\overline{z}$ where we identify $z \in \C$ with
the image of $z$ by the canonical surjection $\C \to \C / ( \Z + \zeta
\Z )$. We also denote it by $\rho \colon E \to E$. 

\begin{prop}
  We fix $\xi \in \C$ so that $\xi^2 = \zeta$.
  \begin{enumerate}
    \item $\alpha \colon E \to E;\ z \mapsto \zeta^i z + a$ is a cocycle
      if and only if there exists $r \in \R$ such that $a = \sqrt{- 1}
      \xi^i r$.
      \label{ell:cocycle}
    \item Any cocycles are equivalent to $z \mapsto z$ or $z \mapsto
      - z$, which are not equivalent to each other. 
      \label{ell:cocycle_equiv}
  \end{enumerate}
\end{prop}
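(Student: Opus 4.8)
The plan is to imitate the computation in the proof of Proposition~\ref{fh}, now with $\Aut_{\C} E = \{z\mapsto \zeta^i z + a\}$ and the real structure $\rho\colon z\mapsto\overline z$. Write $\Lambda := \Z + \zeta\Z$, so $E = \C/\Lambda$, and recall $|\zeta| = |\xi| = 1$, hence $\overline\zeta = \zeta^{-1}$, $\overline\xi = \xi^{-1}$, and in particular $\zeta^i\overline{\xi^i} = \xi^{2i}\xi^{-i} = \xi^i$; these are the only arithmetic facts about $\zeta,\xi$ that enter.

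For (1), given $\alpha\colon z\mapsto\zeta^i z + a$ we have $\alpha\rho\colon z\mapsto\zeta^i\overline z + a$, so
\[
  (\alpha\rho)^2(z) = \zeta^i\overline{\zeta^i\overline z + a} + a = \zeta^i\overline{\zeta^i}\,z + (\zeta^i\overline a + a) = z + (\zeta^i\overline a + a),
\]
using $\zeta^i\overline{\zeta^i}=1$. Thus $\alpha$ is a cocycle iff $\zeta^i\overline a + a\in\Lambda$. If $a \equiv \sqrt{-1}\,\xi^i r \pmod{\Lambda}$ with $r\in\R$, then for this representative $\zeta^i\overline a = \zeta^i(-\sqrt{-1})\,\overline{\xi^i}\,r = -\sqrt{-1}\,\xi^i r = -a$, so $\zeta^i\overline a + a = 0\in\Lambda$ (and the property is invariant under changing $a$ by $\Lambda$, since $\zeta^i\overline\Lambda + \Lambda = \Lambda$); hence $\alpha$ is a cocycle. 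Conversely, the $\R$-linear map $T\colon\C\to\C$, $T(z) = \zeta^i\overline z + z$, has kernel exactly $\R\cdot\sqrt{-1}\,\xi^i$ (write $z = \sqrt{-1}\,\xi^i w$ in $\zeta^i\overline z = -z$; it reduces to $\overline w = w$), hence $T$ has rank $1$ and its image is the $(+1)$-eigenline of $z\mapsto\zeta^i\overline z$. One then checks $T(\Lambda) = \Lambda\cap\im T$: since $\Lambda = \Z\cdot 1 + \Z\cdot\zeta$, this follows from $T(1) = 1 + \zeta^i$ and $T(\zeta) = \zeta + \zeta^{i-1}$, which in each residue $i\bmod 6$ generate the rank-$1$ lattice $\Lambda\cap\im T$. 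Granting this, a cocycle $\alpha$ has $T(a)\in\Lambda\cap\im T = T(\Lambda)$, say $T(a) = T(\ell)$ with $\ell\in\Lambda$, so $a - \ell\in\ker T = \R\cdot\sqrt{-1}\,\xi^i$, i.e. $a\equiv\sqrt{-1}\,\xi^i r\pmod{\Lambda}$, proving (1).

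For (2), write $\phi\in\Aut_{\C} E$ as $\phi\colon z\mapsto\zeta^j z + b$; a direct substitution gives
\[
  \phi^{-1}\circ(\alpha\rho)\circ\phi\colon z\longmapsto \zeta^{\,i-2j}\,\overline z + \zeta^{-j}\bigl(\zeta^i\overline b + a - b\bigr).
\]
So a cocycle with linear part $\zeta^i$ is equivalent to one with linear part $\zeta^{i'}$ exactly when $i'\equiv i-2j\pmod 6$ for some $j$; thus $i\bmod 2$ is an invariant of the equivalence class, and choosing $j$ suitably (with $b=0$) reduces any cocycle to one with $i\in\{0,1\}$. For such a cocycle, (1) gives translation part $\equiv\sqrt{-1}\,\xi^i r\pmod{\Lambda}$; conjugating by $\phi\colon z\mapsto z+b$ with $b = \tfrac12\sqrt{-1}\,\xi^i r$ and $j=0$ replaces the translation part by $\zeta^i\overline b + a - b = a - (b-\zeta^i\overline b) = a - \sqrt{-1}\,\xi^i r\equiv 0\pmod{\Lambda}$, since $b-\zeta^i\overline b = 2b$. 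Hence every cocycle is equivalent to $z\mapsto z$ ($i=0$) or $z\mapsto\zeta z$ ($i=1$), and the latter is equivalent to $z\mapsto -z = (z\mapsto\zeta^3 z)$ (take $j=2$, $b=0$). Finally $z\mapsto z$ and $z\mapsto -z$ are inequivalent because $\zeta^0$ and $\zeta^3$ have different parity.

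The only step that is more than bookkeeping is the identity $T(\Lambda) = \Lambda\cap\im T$ in the converse of (1) — equivalently, the claim that every $a$ with $\zeta^i\overline a + a\in\Lambda$ can be translated by an element of $\Lambda$ into the line $\R\cdot\sqrt{-1}\,\xi^i$. I would settle it by the short finite check over $i\in\{0,\dots,5\}$ indicated above (or, more conceptually, by noting that $z\mapsto\zeta^i\overline z$ is a reflection symmetry of the hexagonal lattice $\Lambda$, for which $1+(\text{reflection})$ carries $\Lambda$ onto its fixed sublattice). Everything else follows mechanically by feeding the explicit forms of $\Aut_{\C} E$ and $\rho$ into the cocycle condition and the equivalence relation of Proposition~\ref{fh}'s Remark.
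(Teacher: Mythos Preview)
Your proof is correct and follows essentially the same route as the paper: compute $(\alpha\rho)^2$ for part (1) and the conjugate $\phi^{-1}(\alpha\rho)\phi$ for part (2). The differences are minor. In (1) the paper simply writes ``Therefore, the result follows'' after obtaining $(\alpha\rho)^2(z)=z+(\zeta^i\overline a+a)$, tacitly treating the cocycle condition as $\zeta^i\overline a+a=0$ in $\C$; your lattice argument via $T(\Lambda)=\Lambda\cap\im T$ handles the passage from ``$\in\Lambda$'' to ``$\equiv 0$'' more carefully, and you rightly flag that identity as the one nontrivial step (the case check works). In (2) you reduce $i$ first and then kill the translation, while the paper kills the translation first (using $b=\tfrac{1+\sqrt{-1}}{2}\xi^i x$ rather than your $b=\tfrac12\sqrt{-1}\,\xi^i r$) and then reduces $i$; both orderings and both choices of $b$ are valid and lead to the same conclusion.
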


\begin{proof}
  (\ref{ell:cocycle}) By definition, $\alpha$ is a cocycle if and only
  if $( \alpha \circ \rho )^2 = \id$. 
  \begin{align*}
    ( \alpha \circ \rho )^2 ( z ) & = \zeta^i ( \overline{\zeta^i
    \overline{z} + a} ) + a \\
    & = z + ( \zeta^i \overline{a} + a )
  \end{align*}
  Therefore, the result follows. 

  (\ref{ell:cocycle_equiv}) Let $\sigma \colon E \to E;\ z \mapsto
  \zeta^i z + a$ be a cocycle and $\tau \colon E \to E;\ z \mapsto
  \zeta^j z + b$. Then, $\tau^{- 1} \colon z \mapsto \zeta^{- j} z -
  \zeta^j b$. Therefore, 
  \begin{align*}
    & ( \tau^{- 1} \circ ( \sigma \rho ) \circ \tau ) ( z ) \\
    = & \tau^{- 1} ( \zeta^i ( \overline{\zeta^j z + b} ) + a ) \\
    = & \zeta^{i - 2 j} \overline{z} + \zeta^{- j} a - \zeta^j b +
    \zeta^{i - j} \overline{b}
  \end{align*}
  First, $z \mapsto z$ and $z \mapsto - z$ are not equivalent to each
  other by this calculation.

  Also, in the above, we can write $a = \sqrt{- 1} \xi^i x$ for
  $x \in \R$ by (\ref{ell:cocycle}). Then, by setting \[
    j = 0,\ b = \frac{1 + \sqrt{- 1}}{2} \xi^i x,
  \]
  we see that $z \to \zeta^i z + a$ is equivalent to $z \to \zeta^i z$,
  so, by setting $a = b = 0$ in the above calculation, we get the result.
\end{proof}

Note that the negation map in the above corresponds to a linear
automorphism \[
  F^1_3 \to F^1_3;\ [ X_0 : X_1 : X_2 ] \mapsto [ X_1 : X_0 : X_2 ]
\]
by taking $[ 1 : - 1 : 0 ]$ as a unit. From this, we immediately get the
following result:

\begin{thm}
  In $F^1_3$, any cocycles are equivalent to $H^3_1$ or $H^1_1$. 

  In particular, $F^1_3$ has exactly $2$ real forms. 
  \label{ell}
\end{thm}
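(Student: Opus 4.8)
The plan is to combine the two parts of Proposition~\ref{ell} (i.e.\ Proposition~\ref{ell:cocycle}--\ref{ell:cocycle_equiv}, the proposition immediately above) with the standard dictionary between equivalence classes of cocycles and real forms supplied by Theorem~\cite{MR181643}. Concretely, the theorem asserts a bijection between $\R$-isomorphism classes of real forms of $F^1_3$ and $H^1(\Gal(\C/\R),\Aut F^1_3)$, and the latter set is, by definition, the set of equivalence classes of cocycles in $\Aut F^1_3$ under the conjugation-by-$\rho$ relation. So the entire content of the theorem is to read off that set from the preceding proposition.

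First I would invoke part~\ref{ell:cocycle_equiv} of the proposition: every cocycle of $E$ is equivalent either to $z\mapsto z$ or to $z\mapsto -z$, and these two are not equivalent. Thus $H^1(\Gal(\C/\R),\Aut F^1_3)$ has exactly two elements, represented by $\id_E$ and the negation map $[-1]$. Next I would translate these two automorphisms of the abstract elliptic curve $E$ back into automorphisms of the plane cubic $F^1_3\subset\P^2$, using the explicit change of coordinates recorded just before the proposition (the substitution $X=-12X_2/(X_0+X_1)$, $Y=36(X_0-X_1)/(X_0+X_1)$ identifying $F^1_3$ with $E=\C/(\Z+\zeta\Z)$) together with the choice of $[1:-1:0]$ as the origin. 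The identity on $E$ is of course the identity on $F^1_3$, i.e.\ $H^3_1$ in the notation of Theorem~\ref{odd_d}/\ref{even_d}. The remark immediately following the proposition already observes that the negation map on $E$ corresponds to the linear automorphism $[X_0:X_1:X_2]\mapsto[X_1:X_0:X_2]$ of $F^1_3$, which is the transposition $\sigma=(0\,1)\in\mathfrak S_3$; as a matrix this is $H^1_1$ (type $1^1\cdot 2^1$). Hence the two cocycle classes are exactly $H^3_1$ and $H^1_1$, and there are exactly $2$ real forms, giving the stated formula $\lfloor 1/2\rfloor+2=2$ for $n=1$.

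There is essentially no hard step here: the real work was done in Proposition~\ref{ell} (classifying the cocycles of $E$) and in the remark identifying the negation map with a coordinate swap. The only point requiring a little care is the bookkeeping at the level of $\Aut F^1_3$: since $(n,d)=(1,3)$ is excluded from Shioda's theorem, one cannot invoke $\Aut F^1_3=G^1_3\cdot\mathfrak S_3$, so I must argue entirely through the elliptic-curve model $\Aut E$ and only at the end match the two representatives against the matrix forms $H^r_1$. In particular I should note that the negation map does genuinely lie in the coordinate-swap $\mathfrak S_3$-part and is not one of the diagonal automorphisms, which is what the choice of unit $[1:-1:0]$ guarantees; this is the one place a reader might want the extra sentence. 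Everything else is a direct citation of the results above.
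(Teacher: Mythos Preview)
Your proposal is correct and follows essentially the same route as the paper: the paper derives the theorem immediately from the preceding proposition together with the remark that the negation on $E$ corresponds to the coordinate swap $[X_0:X_1:X_2]\mapsto[X_1:X_0:X_2]$, and you simply spell this out. The only tiny wrinkle is that the swap $(0\,1)$ is not literally the matrix $H^1_1$ (which swaps the last two coordinates), but since any two transpositions in $\mathfrak S_3$ are conjugate, and conjugation by $\psi\in\mathfrak S_3$ preserves cocycle equivalence, the class is indeed that of $H^1_1$; your parenthetical ``type $1^1\cdot 2^1$'' shows you have this in mind.
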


\section{Degree $2$ cases}
\label{quad}

(This section is based on the advice from Professor J\'{a}nos
Koll\'{a}r. )

In this section, we consider the number and explicit descriptions of the
real forms of $X_n := F^n_2 =
\left( X_0^2 + \cdots + X_{n + 1}^2 = 0 \right) \subset \P^{n + 1}$. 

Let $Q_{r, s} \subset \P^{r + s - 1}$ be a quadratic hypersurface
defined as follows: \[
  Q_{r, s} := \left( X_0^2 + \cdots + X_{r - 1}^2 - X_r^2 - \cdots
  X_{r + s - 1}^2 = 0 \right) \subset \P^{r + s - 1}.
\]

Then, $F^1_2$ is isomorphic to $\P^1$, and has exactly two real forms
$Q_{2, 0}$ and $Q_{1, 1}$ up to isomorphism. Also, it is known that
$F^2_2$ has exactly four real forms $Q_{4, 0}$, $Q_{3, 1}$, $Q_{2, 2}$,
and $Q_{3, 0} \times Q_{2, 1}$ up to isomorphisms (cf. \cite{Rus02}). 

Therefore, we consider the case where $n \ge 3$. 

\begin{lem}
  When $n \ge 3$, any real forms of $X_n$ can be embedded into real forms
  of $\P^{n + 1}$.
  \label{qemb}
\end{lem}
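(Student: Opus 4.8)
The plan is to exploit the fact that a real form of $X_n$ is the same data as a real structure $\rho'$ on $X_n = F^n_2$, and to show that when $n \ge 3$ every such $\rho'$ is the restriction of a real structure on the ambient $\P^{n+1}$. Concretely, a real structure on $X_n$ is an antiholomorphic involution $\rho' \colon X_n \to X_n$, equivalently (by Theorem~\ref{}, i.e. the equivalence of real structures with cocycles) it is of the form $\alpha\circ\rho$ for $\alpha \in \Aut_\C X_n$ with $(\alpha\rho)^2 = \id$. So it suffices to lift $\alpha$ to an automorphism of $\P^{n+1}$ that preserves $X_n$, since $\rho = \tilde\rho|_{X_n}$ already extends to the standard complex conjugation $\tilde\rho$ on $\P^{n+1}$.

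First I would recall that $\Aut_\C X_n \cong \PO(n+2,\C)$, acting through the standard linear representation on the coordinates $X_0,\dots,X_{n+1}$; this is exactly the statement quoted in the introduction ($\Aut F^n_2 \cong \PO(n+2)$) and, crucially, it uses $n \ge 3$ (for $n=1,2$ the quadric has extra automorphisms because $F^1_2 \cong \P^1$ and $F^2_2 \cong \P^1\times\P^1$, where $\Aut$ is strictly larger than $\PO$). Since every element of $\PO(n+2,\C)$ is represented by a linear map of $\C^{n+2}$ preserving the quadratic form $q = \sum X_i^2$ up to scalar, it acts on $\P^{n+1} = \P(\C^{n+2})$ as a projective linear automorphism, and by construction it carries $X_n = (q=0)$ to itself. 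Thus $\alpha$ extends to $\tilde\alpha \in \Aut_\C \P^{n+1}$ with $\tilde\alpha(X_n) = X_n$ and $\tilde\alpha|_{X_n} = \alpha$.

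Next I would check that $\tilde\alpha\circ\tilde\rho$ is again a real structure on $\P^{n+1}$, i.e. that $(\tilde\alpha\tilde\rho)^2 = \id$ in $\Aut_\C\P^{n+1}$. The square $(\tilde\alpha\tilde\rho)^2 = \tilde\alpha\,\tilde\rho\,\tilde\alpha\,\tilde\rho$ is a $\C$-automorphism of $\P^{n+1}$ whose restriction to $X_n$ equals $(\alpha\rho)^2 = \id_{X_n}$. A nonlinear worry does not arise here since $\tilde\alpha\tilde\rho$ is still induced by an antilinear map on $\C^{n+2}$; its square is induced by a linear map $M \in \GL(n+2,\C)$, and we need $M$ to be scalar. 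Because $(q=0)$ is a nondegenerate quadric, a projective-linear automorphism of $\P^{n+1}$ that restricts to the identity on $X_n$ must be the identity of $\P^{n+1}$ — indeed, the $\binom{n+3}{2}$ monomials of degree $2$ that vanish on $X_n$ are exactly the multiples of $q$, so fixing $X_n$ pointwise forces $M$ to act as a scalar on $\C[X]_2$, hence on $\C[X]_1$, hence $M$ is scalar. (This is where $n\ge 3$ enters again in ensuring $X_n$ is irreducible and nondegenerate in the relevant sense; for $n\ge 1$ the quadric is already nondegenerate, but the argument is cleanest when there is no product structure.) Therefore $(\tilde\alpha\tilde\rho)^2 = \id$, so $\tilde\rho' := \tilde\alpha\circ\tilde\rho$ is a real structure on $\P^{n+1}$, and the corresponding real form $(\P^{n+1})_\R$ of $\P^{n+1}$ contains the real form $(X_n)_\R$ of $X_n$ as a closed subscheme, since the closed immersion $X_n \hookrightarrow \P^{n+1}$ is $\rho'$-to-$\tilde\rho'$ equivariant and hence descends to $\R$.

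The main obstacle is the lifting of an \emph{abstract} automorphism $\alpha \in \Aut_\C X_n$ to a \emph{linear} automorphism of the ambient projective space: this is precisely the content of the identification $\Aut F^n_2 \cong \PO(n+2)$, which fails for $n=1,2$ and is exactly why the lemma is stated only for $n\ge 3$. Everything after that — preservation of $X_n$, the involution check, and the equivariance of the embedding — is formal linear algebra with the quadratic form $q$, using that the ideal of $X_n$ in degree $2$ is the single line $\C q$.
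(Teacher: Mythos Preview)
Your proof is correct and takes essentially the same approach as the paper: both rest on the identification $\Aut X_n \cong \PO(n+2)$ for $n \ge 3$, so that every real structure on $X_n$ is induced by a linear cocycle and hence extends to a real structure on the ambient $\P^{n+1}$. The paper packages this via the linear system $|H|$ (any real structure preserves $|H|$ since all automorphisms are linear, so the embedding $\varphi_{|H|}$ descends to $\R$), while you unwind the same content by lifting the cocycle through $\PO(n+2)\hookrightarrow\PGL(n+2)$ and verifying the involution condition directly; these are two phrasings of one argument.

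One small wobble worth tightening: your justification that a projective-linear map restricting to the identity on $X_n$ must be the identity of $\P^{n+1}$ does not actually follow from ``the degree-$2$ ideal is $\C q$, so $M$ acts as a scalar on $\C[X]_2$'' --- that step is not valid as written. The clean argument is that a smooth quadric in $\P^{n+1}$ is nondegenerate and contains $n+3$ points in general position (e.g.\ suitable points of the form $[e_i+\sqrt{-1}\,e_j]$), so only the identity of $\PGL(n+2)$ fixes it pointwise.
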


\begin{proof}
  Let $H$ be a hyperplane section of $X_n$. Then, $| H |$ induces an
  embedding $\varphi_{| H |} \colon X_n \hookrightarrow \P^{n + 1}$. 

  Considering the automorphism group of $X_n$, we see that $X_n$ is also
  a smooth hypersurface of degree $2$ in $\P^{n + 1}$ in this embedding. 
  Then, every real structure of $X_n$ preserves $| H |$ as every
  automorphism of $X_n$ is linear. Therefore, by considering the quotient
  by a real structure, we see that every real form of $X_n$ can be
  embedded into some real form of $\P^{n + 1}$. 
\end{proof}

\begin{thm}
  When $n$ is odd, any real forms of $F^2_n$ are isomorphic to $Q_{r, s}$
  for some $r$,~$s$ with $r \ge s, r + s = n + 2$. 
  When $n$ is even, any real forms of $F^2_n$ are isomorphic to
  $Q_{r, s}$ for some $r$,~$s$ with $r \ge s, r + s = n + 2$, or
  the real form corresponding to the cocycle $L_n$ in $\Aut X_n$. 

  In particular, when $n$ is even, $F^2_n$ has exactly $\frac{n}{2} + 3$
  real forms, and when $n$ is odd, $F^2_n$ has exactly $\frac{n + 1}{2}
  + 1$ real forms.
  \label{qrf}
\end{thm}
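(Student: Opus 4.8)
The plan is to leverage the bijection between real forms and $H^1(\Gal(\C/\R),\Aut X_n)$, combined with the known structure $\Aut F^n_2 \cong \PO(n+2)$, and then identify each cohomology class explicitly with one of the listed real forms. First I would recall that, since $\Aut X_n$ is (the real points of) an affine algebraic group, the pointed set $H^1(\Gal(\C/\R),\Aut X_n)$ is finite by \cite{MR181643}; this gives finiteness a priori and lets us focus on enumeration. I would then classify real structures $\alpha\rho$ up to conjugacy. A real structure on $X_n$ is given by an anti-holomorphic involution, equivalently by a complex automorphism $\alpha \in \PO(n+2,\C)$ with $(\alpha\rho)^2 = \id$, i.e.\ $\alpha\bar\alpha = \id$ (where $\bar{\cdot}$ is entrywise conjugation, since $\rho$ here is the standard complex conjugation on coordinates). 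Lifting to $\mathrm O(n+2,\C)$, the cocycle condition becomes $A\bar A = \lambda I$ for a scalar $\lambda$; rescaling reduces to the two cases $A\bar A = I$ and $A\bar A = -I$.

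Next I would invoke the classical classification of such ``conjugate-symmetric'' matrices. For $A\bar A = I$: writing $A = \id + $ correction and using the standard argument (the map $B \mapsto B\bar B$ and Hilbert 90 for $\GL_n$, or directly diagonalizing the associated Hermitian-type data), one shows $A$ is conjugate in $\mathrm O(n+2,\C)$, up to the action twisted by $\rho$, to a diagonal sign matrix $\mathrm{diag}(1,\dots,1,-1,\dots,-1)$ with $r$ ones and $s$ minus-ones, $r+s = n+2$; the resulting real form is exactly the quadric $Q_{r,s}$, and $Q_{r,s} \cong Q_{s,r}$ so we may assume $r \ge s$. This accounts for $\lfloor (n+2)/2\rfloor + 1$ classes. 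For $A\bar A = -I$: such $A$ exists only when $n+2$ is even, i.e.\ $n$ even, and then $A$ is conjugate (in the twisted sense) to the standard symplectic-type block-diagonal matrix with blocks $\left(\begin{smallmatrix} 0 & 1 \\ -1 & 0\end{smallmatrix}\right)$ — this is precisely the cocycle $L_n$ of Theorem \ref{even_d}. One must check this class is genuinely distinct from all the $Q_{r,s}$: this follows because its real locus is empty (no real point: the real structure has no fixed point on $\P^{n+1}(\C)$ restricted to $X_n$, as $A\bar A = -I$ forces the fixed-point equation $Ax = \bar x$ to have only the zero solution after the sign obstruction), whereas for $n$ even every $Q_{r,s}$ with $s \ge 1$ has real points, and $Q_{n+2,0}$ is the anisotropic form which is still topologically distinguishable, so $L_n$ is new. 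Counting: for $n$ odd we get $r \ge s \ge 0$, $r+s = n+2$ with $n+2$ odd, giving $\frac{n+3}{2}$ choices, but $Q_{n+2,0}$ coincides... actually $r$ ranges over $\frac{n+3}{2}, \dots, n+2$, that is $\frac{n+1}{2}+1$ values — wait, let me recount in the writeup; the arithmetic must land on $\frac{n+1}{2}+1$ for $n$ odd and $\frac{n}{2}+1$ quadric forms plus $Q_{n+1,0}\times Q_{2,1}$-type degenerate form plus $L_n$, i.e.\ $\frac{n}{2}+3$ for $n$ even. I will need to be careful here about whether the reducible form $Q_{3,0}\times Q_{2,1}$ analogue appears; by Lemma \ref{qemb} every real form embeds in a real form of $\P^{n+1}$, and the real forms of $\P^{n+1}$ for $n+1$ even are just $\P^{n+1}_\R$, which rules out the split-product phenomenon for $n \ge 3$ odd, so only honest quadrics $Q_{r,s}$ survive — that pins the odd count to $\frac{n+1}{2}+1$.

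The main obstacle I expect is the bookkeeping that translates a cohomology class/cocycle $\alpha \in \PO(n+2)$ into the \emph{isomorphism class} of the resulting real form as an abstract real variety, rather than merely as an embedded quadric: a priori two non-conjugate cocycles could give $\R$-isomorphic real forms if the isomorphism does not respect the linear structure. Lemma \ref{qemb} handles this for $n \ge 3$ by forcing every real form back into a real form of $\P^{n+1}$ compatibly with $|H|$, so that $\R$-isomorphisms are linear; hence conjugacy classes of cocycles biject with isomorphism classes, and the distinctness of the $Q_{r,s}$ (e.g.\ by the signature invariant, recoverable from the topology of the real locus or from $\pi_0$, or from the discriminant and Hasse invariant of the quadratic form over $\R$) together with the emptiness-of-real-locus invariant for $L_n$ completes the argument. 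The remaining work is the elementary linear algebra of normalizing $A$ subject to $A^tA = I$ (orthogonality) and $A\bar A = \pm I$ simultaneously, which I would carry out by simultaneous diagonalization and is routine once set up.
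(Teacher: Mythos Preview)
Your overall strategy---lift cocycles from $\PO(n+2,\C)$ to $\mathrm O(n+2,\C)$, normalize to $A\bar A=\pm I$, and classify each case by twisted conjugacy---is sound and is genuinely different from the paper's route. The paper argues geometrically: it uses Lemma~\ref{qemb} to embed any real form into a real form of $\P^{n+1}$, then for $\P^{n+1}_\R$ invokes diagonalization of real quadratic forms to get the $Q_{r,s}$, and for $\widetilde{\P^{n+1}}$ (only when $n$ is even) runs a recursive orthogonal-complement argument on lines $L\cong\widetilde{\P^1}$ to pin down a unique embedded quadric, which is the $L_n$-form. Your approach trades this geometry for the linear algebra of $H^1(\Gal(\C/\R),\mathrm O_{n+2})$; either works, and yours is arguably more uniform.

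That said, two points need repair. First, your $n$ even count is wrong in detail: with $r\ge s\ge 0$ and $r+s=n+2$ even, $s$ runs from $0$ to $\tfrac{n+2}{2}$, giving $\tfrac{n}{2}+2$ quadrics $Q_{r,s}$, not $\tfrac{n}{2}+1$. There is no ``$Q_{n+1,0}\times Q_{2,1}$-type degenerate form'' for $n\ge 3$; that phenomenon is special to $n=2$ (where $X_2\cong\P^1\times\P^1$), and Lemma~\ref{qemb} already forces every real form into a projective space as an irreducible quadric. Dropping the spurious product form and correcting the quadric count gives $\tfrac{n}{2}+2$ plus $L_n$, which is the desired $\tfrac{n}{2}+3$.

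Second, your distinctness argument for $L_n$ versus $Q_{n+2,0}$ does not go through via real loci: both have empty real locus, so ``empty real locus'' alone does not separate them. The clean invariant is the one you already set up: for any lift $A\in\mathrm O(n+2,\C)$ of a cocycle in $\PO$, the sign in $A\bar A=\pm I$ is preserved under twisted conjugacy in $\PO$ (if $B=\lambda C^{-1}A\bar C$ with $C\in\mathrm O$ then $B\bar B=|\lambda|^2\,C^{-1}(A\bar A)C$, and $|\lambda|^2>0$). This is exactly how the paper separates $L_n$ from the $K^{s,r}$ cocycles, by observing $L_n^2=-I$ versus $(K^{s,r})^2=I$ as matrices. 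Use that invariant rather than the real locus.
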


\begin{proof}
  By \cite{Ben16}, $P^{n + 1}$ has exactly two real forms up to
  isomorphisms when $n$ is even,
  which correspond to the following two real structures: 
  \begin{align*}
    \tilde{\rho} & \colon \P^{n + 1} \to \P^{n + 1};\ [ z_0 : \cdots :
    z_{n + 1} ] \mapsto \left[ \overline{z_0} : \cdots :
    \overline{z_{n + 1}} \right] \\
    \tilde{\rho'} & \colon \P^{n + 1} \to \P^{n + 1};\ [ z_0 : \cdots :
    z_{n + 1} ] \mapsto \left[ - \overline{z_1} : \overline{z_0} :
    \cdots : \overline{z_{n + 1}} : - \overline{z_n} \right].
  \end{align*}
  Here, we denote by $\widetilde{\P^{n + 1}}$ the real form of
  $\P^{n + 1}$
  defined by the real structure $\tilde{\rho'}$.
  Also, when $n$ is odd, $\P^{n + 1}$ has exactly one real form up to
  isomorphisms, which corresponds to above $\tilde{\rho}$. 

  Therefore, by Lemma \ref{qemb}, we enough to consider the case where
  a real form of $F^2_n$ is embedded into these two real forms of
  $\P^{n + 1}$. 

  First, suppose a real form $Y$ of $X_n$ is embedded into the real form
  of $\P^{n + 1}$ corresponding to the real strucure $\tilde{\rho}$,
  i.e., $\P^{n + 1}_\R$. Then, $Y$
  is a hypersurface of degree $2$ of $\P^{n + 1}_\R$ by the proof of
  Lemma \ref{qemb}. As $F^n_2$ is defined by a non-degenerated quadratic
  form, $Y$ is isomorphic to $Q_{r, s}$ for some $r, s$ with $r \ge s$
  by considering a diagonalization of symmetric matrix. 

  Second, suppose a real form $Y$ of $X_n$ is embedded into the real form
  of $\P^{n + 1}$ corresponding to the real strucure $\tilde{\rho'}$,
  i.e., $\widetilde{\P^{n + 1}}$. Then, $\widetilde{\P^{n + 1}}$ contains
  $\widetilde{\P^1}$, and as $F^n_2$ is defined by a non-degenerated
  quadratic form, we can take $L \subset \widetilde{\P^{n + 1}}$ which is
  isomorphic to $\widetilde{\P^1}$ so that it is not contained in $Y$.
  We may assume that $L = \left( X_2 = \cdots = X_{n + 1} = 0 \right)
  \subset \widetilde{\P^{n + 1}}$. Also, we may assume that $Y \cap L =
  \left( X_0^2 + X_1^2 = 0 \right) \subset L$ as hypersurfaces in
  $\widetilde{\P^1}$ defined by $\left( X_0^2 + X_1^2 = 0 \right)$ and
  $\left( X_0^2 - X_1^2 = 0 \right)$ is isomorphic. Then, by considering
  orthogonal complement, we can set the
  coordinate of $\widetilde{\P^{n + 1}}$ so that $Y$ is defined by \[
    X_0^2 + X_1^2 + q \left( X_2, \ldots, X_{n + 1} \right) = 0
  \]
  where $q$ is a non-degenerated quadratic form. Therefore, by recurring
  this procedure, we get the unique real form \[
    \left( X_0^2 + \cdots X_{n + 1}^2 = 0 \right) \subset
    \widetilde{\P^{n + 1}}
  \]
  of $X_n$ embedded in $\widetilde{\P^{n + 1}}$, and it is clear that this
  real form correspond the cocycle $L_n$ in $\Aut X_n$. 

  By considering the real loci of $Q_{r, s}$, we see that $Q_{r, s}$'s
  with $r \ge s$ are not isomorphic to each other. Also, a real form
  $Q_{r, s}$ of $X_n$ corresponds to a cocycle $K^{s, r}_2$, and when
  $n$ is even, this cannot be equivalent to $L_n$. In fact, as matrices,
  \[
    \begin{pmatrix}
      - I_s & \\
      & I_r
    \end{pmatrix}^2
    = I_{r + s}
  \]
  whereas \[
    \begin{pmatrix}
      0 & 1 & & & \\
      - 1 & 0 & & & \\
      & & \ddots & & \\
      & & & 0 & 1 \\
      & & & - 1 & 0
    \end{pmatrix}^2
    = - I_{n + 2}.
  \]
  Therefore, we see that the number of real forms of $X_n$ is
  $\frac{n}{2} + 3$ when $n$ is even, and $\frac{n + 1}{2} + 1$ when $n$
  is odd. 
\end{proof}

\section{Explicit description of real forms of Fermat hypersurfaces}

In this section, we consider the explicit description of real forms
corresponding to $H^r_n$, $K^{s, t}_n$, and $L_n$. 
We omit subindexes of $H^r_n$ or $K^{s, t}_n$ if it is clear from the
context. 

First, we consider the real form corresponding to cocycles $L_n$. As we
see in Theorem \ref{odd_d}, Theorem \ref{even_d}, and Theorem
\ref{ell}, cocycles of this type appear only when $n$ and $d$ are
even. Let $n = 2 k$. 
This is a real form induced by the real form $\widetilde{\P^{n + 1}}$ of
$\P^{n + 1}$, i.e., corresponding to the real structure $\tilde{\rho'}$.

Therefore, the quotient $Z^{2 k}_d$ of $F_{d, \C}^{2 k} \subset
\P_\C^{2 k + 1}$ with even $d$ by the restriction of $\rho'$ is the real
form corresponding to $L_{2 k}$. Note that this real form can be embedded
into $\widetilde{\P^{n + 1}}$. 

Next, we consider the real form corresponding to cocycles $K$ or $H$.
When considering these real forms, the following lemma is useful: 

\begin{lem}
  Let $\xi_d$ be a primitive $d$-th root of unity. Then, 
  \begin{enumerate}
    \item The invariant ring of the $\R$-homomorphism $\alpha_1 \colon
      \C [ X ] \to \C [ X ]$ that sends $X$ to $\xi_d X$ and $c \in \C$
      to $\overline{c}$ is $\R \left[ \xi_{2 d} X \right]$, with
      $\xi_{2 d}$ satisfying $\xi_{2 d}^2 = \xi_d$. 
      \label{inv1}
    \item The invariant ring of the $\R$-homomorphism $\alpha_2 \colon
      \C [ X_1, X_2 ] \to \C [ X_1, X_2 ]$ that sends $X_1$ to $X_2$,
      $X_2$ to $X_1$, and $c \in \C$ to $\overline{c}$ is $\R \left[ X_1
      + X_2,
      \left( 1 - \sqrt{- 1} \right) \left( X_1 - \sqrt{- 1} X_2 \right)
      \right]$. 
      \label{inv2}
  \end{enumerate}
  \label{inv_ring}
\end{lem}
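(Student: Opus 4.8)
The plan is to verify each statement by direct computation with the fixed-point (invariant) condition, since both $\alpha_1$ and $\alpha_2$ are $\R$-linear anti-involutions (conjugate-linear involutions) on the relevant polynomial rings, so the invariant ring is an $\R$-form of the $\C$-algebra.

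For part (\ref{inv1}): first I would observe that $\alpha_1$ is conjugate-linear and satisfies $\alpha_1^2 = \id$ (since $\alpha_1^2(X) = \overline{\xi_d}\,\xi_d X = X$ because $\overline{\xi_d} = \xi_d^{-1}$). Write $Y := \xi_{2d} X$; I then compute $\alpha_1(Y) = \overline{\xi_{2d}}\,\xi_d X = \xi_{2d}^{-1}\xi_d X = \xi_{2d} X = Y$, using $\overline{\xi_{2d}} = \xi_{2d}^{-1}$ and $\xi_{2d}^2 = \xi_d$. So $\R[Y] \subseteq \C[X]^{\alpha_1}$. For the reverse inclusion, a general element of $\C[X]$ is $\sum_k c_k X^k = \sum_k (c_k \xi_{2d}^{-k}) Y^k$; applying $\alpha_1$ and comparing coefficients of $Y^k$ shows the element is fixed iff each $c_k\xi_{2d}^{-k} \in \R$, i.e. iff it lies in $\R[Y]$. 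That gives equality.

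For part (\ref{inv2}): again $\alpha_2$ is conjugate-linear with $\alpha_2^2 = \id$. Set $U := X_1 + X_2$ and $V := (1-\sqrt{-1})(X_1 - \sqrt{-1}X_2)$. I would check $\alpha_2(U) = X_2 + X_1 = U$, and $\alpha_2(V) = \overline{(1-\sqrt{-1})}\bigl(X_2 - \overline{\sqrt{-1}}\,X_1\bigr) = (1+\sqrt{-1})(X_2 + \sqrt{-1}X_1) = (1+\sqrt{-1})\sqrt{-1}(X_1 - \sqrt{-1}X_2) = (1-\sqrt{-1})(X_1 - \sqrt{-1}X_2) = V$, using $(1+\sqrt{-1})\sqrt{-1} = \sqrt{-1} - 1 = -(1-\sqrt{-1})$... so I'd recompute carefully: $(1+\sqrt{-1})\sqrt{-1} = \sqrt{-1} + (\sqrt{-1})^2 = \sqrt{-1} - 1$, and indeed $-1+\sqrt{-1} = -(1 - \sqrt{-1})$, which is $-(1-\sqrt{-1})$, not $(1-\sqrt{-1})$; so the correct claim must be $\alpha_2(V) = V$ after the sign is absorbed — I'd double-check that $X_1 - \sqrt{-1}X_2$ versus $X_2 + \sqrt{-1}X_1 = \sqrt{-1}(X_1 - \sqrt{-1}X_2)$, hence $\alpha_2(V) = (1+\sqrt{-1})\cdot\sqrt{-1}\cdot(X_1 - \sqrt{-1}X_2)$, and $(1+\sqrt{-1})\sqrt{-1} = -1+\sqrt{-1}$; comparing with $1-\sqrt{-1}$ these differ by sign, so either the intended generator is $(1+\sqrt{-1})(X_1 - \sqrt{-1}X_2)$ or $V$ is only fixed up to sign and one uses $\R[U, V]$ regardless since $\R[V] = \R[-V]$. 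Once the two invariant generators are pinned down, I note $U, V$ are $\C$-linearly independent linear forms (the change of basis matrix from $(X_1, X_2)$ to $(U,V)$ is invertible since its determinant is a nonzero multiple of $(1-\sqrt{-1})(-\sqrt{-1}-1) \ne 0$), so $\C[X_1,X_2] = \C[U,V]$, and then the same coefficient-comparison argument as in part (\ref{inv1}) — now in the monomials $U^aV^b$, on which $\alpha_2$ acts by conjugating the scalar coefficient — shows $\C[U,V]^{\alpha_2} = \R[U,V]$.

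The one genuinely delicate point, and the step I expect to need the most care, is fixing the correct normalizing constant in the second generator $V$ of part (\ref{inv2}): one must get the scalar so that $\alpha_2(V)$ is exactly $V$ (or at worst $\pm V$, which generates the same $\R$-subalgebra), and this is a small but error-prone complex-number manipulation of the kind flagged above. Everything else is the standard fact that a conjugate-linear involution on a polynomial ring over $\C$ has invariant ring an $\R$-form, verified monomial-by-monomial in a basis adapted to the involution.
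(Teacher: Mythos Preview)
Your approach is exactly the paper's: change variables to linear forms fixed by the anti-involution, then read off invariance coefficient-by-coefficient. Part~(\ref{inv1}) is fine and matches the paper verbatim.

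For part~(\ref{inv2}) you have correctly caught a genuine sign slip in the statement (and the paper's own proof repeats it). Your computation $\alpha_2(V)=(1+\sqrt{-1})(X_2+\sqrt{-1}X_1)=(1+\sqrt{-1})\sqrt{-1}(X_1-\sqrt{-1}X_2)=(-1+\sqrt{-1})(X_1-\sqrt{-1}X_2)=-V$ is right: with $V=(1-\sqrt{-1})(X_1-\sqrt{-1}X_2)$ one gets $\alpha_2(V)=-V$, not $V$. However, your fallback ``$\R[V]=\R[-V]$ so it doesn't matter'' is not a valid rescue. The equality $\R[U,V]=\R[U,-V]$ is true as abstract rings, but it is \emph{not} the invariant ring: if $\alpha_2(V)=-V$ then $V\notin\C[X_1,X_2]^{\alpha_2}$, and the coefficient comparison you describe yields instead that $\sum c_{ab}U^aV^b$ is fixed iff $c_{ab}=(-1)^b\overline{c_{ab}}$, i.e.\ $c_{ab}$ is real for even $b$ and purely imaginary for odd $b$. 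That ring is $\R[U,\sqrt{-1}V]=\R\bigl[X_1+X_2,\ (1+\sqrt{-1})(X_1-\sqrt{-1}X_2)\bigr]$, which is a different $\R$-subalgebra of $\C[X_1,X_2]$ from $\R[U,V]$.

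So the correct fix is the first option you mention: replace $1-\sqrt{-1}$ by $1+\sqrt{-1}$ in the second generator (equivalently, use $\sqrt{-1}V$). This is a typo-level issue in the lemma as stated; the downstream applications in the paper only use that the invariant ring is an $\R$-polynomial algebra on two explicit linear forms, and after the further change of variables $Y_1=\tfrac{Y_1'}{2}$, $Y_2=\tfrac{Y_2'-Y_1'}{2}$ the resulting equations are insensitive to this scalar, so nothing later breaks.
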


\begin{proof}
  (\ref{inv1}) For all $f \in \C [ X ]$, there is a unique polynomial
  $\mu ( f ) \in \C [ X ]$ such that \[
    f ( X ) = \mu \left( \xi_{2 d} X \right). 
  \]
  Then, as $\xi_{2 d} X$ is invariant under $\alpha_1$, we see that
  $\alpha_1 \left( f \right) = f$ if and only if $\mu ( f )$ is a
  polynomial of real coefficients.  This shows that the invariant ring
  with $\alpha_1$ is $\R \left[ \xi_{2 d} \right]$. 

  (\ref{inv2}) For all $g \in \C [ X_1, X_2 ]$, there is a unique
  polynomial $\nu ( g ) \in \C [ X_1, X_2 ]$ such that \[
    g ( X_1, X_2 ) = \nu ( g ) \left( X_1 + X_2, \left( 1 - \sqrt{- 1}
    \right) \left( X_1 - \sqrt{- 1} X_2 \right) \right).
  \]
  Then, as $X_1 + X_2$, $\left( 1 - \sqrt{- 1} \right) \left( X_1 -
  \sqrt{- 1} X_2 \right)$ is invariant under $\alpha_2$, we see that
  $\alpha_2 ( g ) = g$ if and only if $\nu ( g )$ is a polynomial with
  real coefficients. 
  This shows that the invariant ring of $\alpha_2$ is $\R \left[ X_1 +
  X_2, \left( 1 - \sqrt{- 1} \right) \left( X_1 - \sqrt{- 1} X_2 \right)
  \right]$. 
\end{proof}

Then, we consider the curve case $F^1_d = ( X_0^d + X_1^d + X_2^d
= 0 )$. 

\begin{thm}
  Assume that $d \ge 3$. Then, 
  the correspondence between cocycles and real forms of $F^1_d$ is given
  in the following table. 

  \begin{center}
    \begin{tabular}{|c|c|} \hline
      cocycle & real form \\ \hline
      $K^{0, 3}_1$ (or $H^0_1$) & $\displaystyle ( Y_0^d + Y_1^d + Y_2^d
      = 0 ) \subset \P^2_\R$ \\ \hline
      $K^{1, 2}_1$ & $\displaystyle ( - Y_0^d + Y_1^d + Y_2^d = 0 )
      \subset \P^2_\R$ \\ \hline
      $K^{0, 1}_1$ (or $H^2_1$) & $\displaystyle ( Y_0^d + ( Y_1 +
      \sqrt{- 1} Y_2 )^d + ( Y_1 - \sqrt{- 1} Y_2 )^d = 0 ) \subset
      \P_\R^2$ \\
      \hline
    \end{tabular}
  \end{center}

  \label{des:curve}
\end{thm}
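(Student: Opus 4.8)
The plan is to pin down, for each of the three cocycle classes $H^r_1 = K^{s,t}_1$ with $(r;s,t) \in \{(3;0,3),(1;1,2),(1;0,1)\}$ (after using Remark~\ref{equiv_comp} to rewrite $H^r_1$ as the appropriate $K^{s,t}_1$), the real structure it induces on $F^1_d$, compute the invariant subring of the homogeneous coordinate ring under the associated anti-involution of $\P^2$, and read off the defining equation of the quotient. Concretely, each cocycle $\alpha$ of Remark~\ref{equiv_comp} has the form $M \cdot \tilde\rho$ where $M$ is a diagonal or block-permutation matrix with entries powers of $\xi_d$ (for the curve case $\tilde\rho$ is complex conjugation on $\P^2$), and the real form is $F^1_d / (\alpha\rho)$. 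So the first step is: for each representative, write down $\alpha\rho$ explicitly as an $\R$-automorphism of $\C[X_0,X_1,X_2]$ sending $X_i$ to a linear form in the $X_j$ composed with conjugation on scalars.

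Next I would compute the ring of invariants of $\C[X_0,X_1,X_2]$ under $\alpha\rho$, splitting the variables into blocks on which $\alpha\rho$ acts either diagonally (by $X \mapsto \xi_d X$ after conjugation, as in $K^{1,0}_1$-type blocks, where the ``$-$'' sign comes from $\xi_d^{d} = 1$ and the relevant power being $d/2$) or by a transposition (as in the bottom $2\times 2$ block of $K^{s,t}_1$). This is exactly where Lemma~\ref{inv_ring} does the work: part~(\ref{inv1}) handles a diagonal block, giving the new real coordinate $Y = \xi_{2d} X$ so that $X^d = (\xi_{2d}X)^d \cdot \xi_{2d}^{-d} = -Y^d$; part~(\ref{inv2}) handles a transposition block $X_1 \leftrightarrow X_2$, giving real coordinates $Y_1 = X_1 + X_2$ and $Y_2 = (1-\sqrt{-1})(X_1 - \sqrt{-1}X_2)$ — and here I would do the short algebra showing that after a further harmless real linear change of coordinates the sum $X_1^d + X_2^d$ becomes $(Y_1 + \sqrt{-1}Y_2)^d + (Y_1 - \sqrt{-1}Y_2)^d$ up to scaling, matching the third row of the table. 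For the case $K^{0,3}_1$ the anti-involution is just $\tilde\rho$ itself, invariants are $\R[X_0,X_1,X_2]$, and the quotient is $(Y_0^d+Y_1^d+Y_2^d=0)\subset\P^2_\R$; for $K^{1,2}_1$ one diagonal block contributes a sign flip, giving $(-Y_0^d+Y_1^d+Y_2^d=0)$.

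Finally I would assemble the equation: the invariant subring is a polynomial ring $\R[Y_0,Y_1,Y_2]$ (this must be checked — that the three invariant forms produced are algebraically independent and generate, which follows from the ``unique $\mu$'' / ``unique $\nu$'' statements in Lemma~\ref{inv_ring} applied coordinatewise), the Fermat relation $\sum X_i^d$ is invariant hence lies in this subring, and rewriting it in the $Y$-coordinates yields the listed equation; taking $\Proj$ of the invariant ring gives the real form as a hypersurface in $\P^2_\R$. I expect the main obstacle to be purely bookkeeping: correctly tracking which power of $\xi_d$ sits in each diagonal entry of the chosen representative (recall $K^{s,t}_1$ has $\xi$, not $\xi^{d/2}$, in the first $s$ slots, so one must be careful that ``$\xi I_s$'' as a \emph{cocycle} is being used, and that $H^2_1 \sim K^{0,1}_1$ via the relations in Remark~\ref{equiv_comp}), and then verifying the real linear change of variables in the transposition block reproduces the exact normalization $(Y_1 \pm \sqrt{-1}Y_2)^d$ rather than some scalar multiple of it. None of the individual steps is deep, but getting all the constants to agree with the table requires care.
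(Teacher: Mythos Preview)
Your proposal is correct and follows essentially the same approach as the paper: for each representative cocycle, compute the invariant subring of $\C[X_0,X_1,X_2]$ under the anti-involution $\alpha\rho$ using Lemma~\ref{inv_ring} block by block (part~(\ref{inv1}) for the diagonal $\xi_d$-block producing the sign flip, part~(\ref{inv2}) for the transposition block), then make the indicated real linear change of variables and read off the defining equation in $\P^2_\R$. One small notational slip worth correcting: $K^{1,2}_1$ is not of the form $H^r_1$ for any $r$ (it has $s+t=3$, hence no transposition block, and only arises for even $d$), so your list ``$(r;s,t)\in\{(3;0,3),(1;1,2),(1;0,1)\}$'' should not pair $K^{1,2}_1$ with any $H^r_1$; this does not affect the argument itself.
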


\begin{proof}
  First, the real form corresponding to $K^{0, 3}$ is clear. 

  Next, we consider the real form corresponding to $K^{1, 2}$. 
  By Lemma \ref{inv_ring}, the invariant ring of the homomorphism
  $\alpha \colon \C [ X_0, X_1, X_2 ] \to \C [ X_0, X_1, X_2 ]$ that
  sends $c \in \C$ to $\overline{c}$, $X_0$ to $\xi_d X_0$, $X_1$ to $X_1$,
  and $X_2$ to $X_2$ is $\R [ \xi_{2 d} X_0, X_1, X_2 ]$ with
  $\xi_{2 d}^2 = \xi_d$.

  Therefore, by setting \[
    Y_0 := \xi_{2 d} X_0,\ Y_1 := X_1,\ Y_2 := X_2,
  \]
  we see that the corresponding real form is \[
    ( - Y_0^d + Y_1^d + Y_2^d = 0 ) \subset \P_\R^2. 
  \]

  Finally, we consider the real form corresponding to $K^{1, 2}$.
  By Lemma \ref{inv_ring}, the invariant ring of the homomorphism $\beta
  \colon \C [ X_0, X_1, X_2 ] \to \C [ X_0, X_1, X_2 ]$ that sends $c \in
  \C$ to $\overline{c}$, $X_0$ to $X_0$, $X_1$ to $X_2$ and $X_2$ to $X_1$ is
  $\R [ X_0, X_1 + X_2, ( 1 - \sqrt{- 1} ) ( X_1 - \sqrt{- 1} X_2 ) ]$.

  Therefore, by setting \[
    Y_0 := X_0, \ {Y'}_1 := X_1 + X_2,\ {Y'}_2 := ( 1 - \sqrt{- 1} )
    ( X_1 - \sqrt{- 1} X_2 ), 
  \]
  and also, by setting \[
    Y_1 := \frac{{Y'}_1}{2},\ Y_2 := \frac{{Y'}_2 - {Y'}_1}{2}, 
  \]
  we see that the corresponding real form is \[
    ( Y_0^d + ( Y_1 + \sqrt{- 1} Y_2 )^d + ( Y_1 - \sqrt{- 1} Y_2 )^d
    = 0 ) \subset \P_\R^2. 
  \]
\end{proof}

In the surface case, by using a similar argument as above, we get the
following result: 

\begin{thm}
  Assume $d \ge 3, \neq 4$. Then, 
  the correspondence between cocycles and real forms of $F^2_d$ with odd
  $d$ is given in the following table: 

  \begin{center}
    \begin{tabular}{|c|c|} \hline
      cocycle & real form \\ \hline
      $H^4_2$ & $\displaystyle
      ( Y_0^d + Y_1^d + Y_2^d + Y_3^d = 0 ) \subset \P_\R^3$
      \\ \hline
      $H^2_2$ & $\displaystyle
      ( Y_0^d + Y_1^d + ( Y_2 + \sqrt{- 1} Y_3 )^d + ( Y_2 - \sqrt{- 1}
      Y_3 )^d = 0 ) \subset \P_\R^3$
      \\ \hline
      $H^0_4$ & $\displaystyle
      ( ( Y_0 + \sqrt{- 1} Y_1 )^d + ( Y_0 - \sqrt{- 1} Y_1 )^d +
      ( Y_2 + \sqrt{- 1} Y_3 )^d + ( Y_2 - \sqrt{- 1} Y_3 )^d = 0 )
      \subset \P_\R^3$
      \\ \hline
    \end{tabular}
  \end{center}

  Also, the correspondence between cocycles and real forms of $F^2_d$
  with even $d$ is given in the following table: 

  \begin{center}
    \begin{tabular}{|c|c|} \hline
      cocycle & real form \\ \hline
      $K^{0, 4}_2$ & $\displaystyle
      ( Y_0^d + Y_1^d + Y_2^d + Y_3^d = 0 ) \subset \P_\R^3$
      \\ \hline
      $K^{1, 3}_2$ & $\displaystyle
      ( - Y_0^d + Y_1^d + Y_2^d + Y_3^d = 0 ) \subset \P_\R^3$
      \\ \hline
      $K^{2, 2}_2$ & $\displaystyle
      ( - Y_0^d - Y_1^d + Y_2^d + Y_3^d = 0 ) \subset \P_\R^3$
      \\ \hline
      $K^{0, 2}_2$ & $\displaystyle
      ( Y_0^d + Y_1^d + ( Y_2 + \sqrt{- 1} Y_3 )^d + ( Y_2 - \sqrt{- 1}
      Y_3 )^d = 0 ) \subset \P_\R^3$
      \\ \hline
      $K^{1, 1}_2$ & $\displaystyle
      ( - Y_0^d + Y_1^d + ( Y_2 + \sqrt{- 1} Y_3 )^d + ( Y_2 -
      \sqrt{- 1} Y_3 )^d = 0 ) \subset \P_\R^3$
      \\ \hline
      $K^{0, 0}_2$ & $\displaystyle
      ( ( Y_0 + \sqrt{- 1} Y_1 )^d + ( Y_0 - \sqrt{- 1} Y_1 )^d +
      ( Y_2 + \sqrt{- 1} Y_3 )^d + ( Y_2 - \sqrt{- 1} Y_3 )^d = 0 )
      \subset \P_\R^3$
      \\ \hline
      $L_2$ & $Z_d^2$
      \\ \hline
    \end{tabular}
  \end{center}

  \label{des:surface}
\end{thm}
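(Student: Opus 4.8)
The plan is to follow the same strategy used in the curve case Theorem~\ref{des:curve}, namely to compute the invariant ring of the relevant real structure coordinate-by-coordinate using Lemma~\ref{inv_ring}, and to read off the defining equation of the quotient. Recall from Remark~\ref{equiv_comp} and the discussion preceding it that each cocycle $K^{s,t}_n$ (resp.\ $H^r_n$) is a block-diagonal automorphism of $\P^{n+1}$: an $\xi I_s$ block, an $I_t$ block, and a number of $2\times 2$ blocks $\left(\begin{smallmatrix}0&1\\1&0\end{smallmatrix}\right)$ (with $H^r_n$ being the $d$ odd specialisation where the $\xi I_s$ block collapses). The associated real structure on $F^2_d$ is the restriction of the $\R$-morphism $\widetilde\rho$ of $\P^3$ composed with this linear map, i.e.\ the $\R$-algebra homomorphism $\C[X_0,\dots,X_3]\to\C[X_0,\dots,X_3]$ sending $c\mapsto\overline c$ and permuting/scaling the $X_i$ according to the block structure.

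First I would treat the diagonal cocycles $H^4_2,H^2_2$ (odd $d$) and $K^{0,4}_2,K^{1,3}_2,K^{2,2}_2$ (even $d$), where the real structure fixes each coordinate up to a scalar $1$ or $\xi_d$. For a coordinate scaled by $\xi_d$, Lemma~\ref{inv_ring}(\ref{inv1}) gives that the invariant ring contributes a single real generator $Y_i:=\xi_{2d}X_i$, and since $(\xi_{2d}X_i)^d=\xi_{2d}^d X_i^d=-X_i^d$ (as $\xi_{2d}^d$ is a primitive square root of unity, hence $-1$), the monomial $X_i^d$ becomes $-Y_i^d$; a coordinate fixed outright contributes $Y_i:=X_i$ with $X_i^d=Y_i^d$. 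Substituting these into $X_0^d+X_1^d+X_2^d+X_3^d=0$ yields exactly the equations listed, with a minus sign in front of each $Y_i^d$ coming from an $\xi I$ block. This handles $H^4_2$ ($s=0$, all plus), $K^{1,3}_2$ (one minus), $K^{2,2}_2$ (two minuses), and $K^{0,4}_2$ coincides with $H^4_2$.

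Next I would treat the cocycles involving one transposition block: $H^2_2$, $K^{0,2}_2$, $K^{1,1}_2$. Here two of the coordinates, say $X_2,X_3$, are swapped (and conjugated), while $X_0,X_1$ are each fixed or scaled by $\xi_d$. By Lemma~\ref{inv_ring}(\ref{inv2}) the invariant ring of the swap on $X_2,X_3$ is generated by $X_2+X_3$ and $(1-\sqrt{-1})(X_2-\sqrt{-1}X_3)$; performing the same linear change of variables as in the proof of Theorem~\ref{des:curve} (setting $Y_2=(X_2+X_3)/2$, $Y_3=((1-\sqrt{-1})(X_2-\sqrt{-1}X_3)-(X_2+X_3))/2$, or an equivalent normalisation) converts $X_2^d+X_3^d$ into $(Y_2+\sqrt{-1}Y_3)^d+(Y_2-\sqrt{-1}Y_3)^d$. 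Combining with the $X_0,X_1$ substitutions (plain $Y_i$, or $Y_i=\xi_{2d}X_i$ giving $-Y_i^d$) produces the stated equations: $H^2_2$ and $K^{0,2}_2$ give $Y_0^d+Y_1^d+(Y_2+\sqrt{-1}Y_3)^d+(Y_2-\sqrt{-1}Y_3)^d=0$, while $K^{1,1}_2$ has one sign flipped. The cocycle $H^0_4$ (odd $d$) and $K^{0,0}_2$ (even $d$) involve two transposition blocks, so one applies Lemma~\ref{inv_ring}(\ref{inv2}) to each pair $\{X_0,X_1\}$ and $\{X_2,X_3\}$ independently, yielding the doubly-symmetric equation. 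Finally, the cocycle $L_2$ was already identified in Section~\ref{quad} and the paragraph preceding Lemma~\ref{inv_ring} as the real form $Z^2_d$ embedded in $\widetilde{\P^3}$, so nothing further is needed there.

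The only mild subtlety — and the step I would be most careful about — is the bookkeeping of the linear change of coordinates in the transposition case: one must verify that the generators produced by Lemma~\ref{inv_ring}(\ref{inv2}) are genuinely a homogeneous coordinate system on $\P^3_\R$ (i.e.\ that the change of basis is invertible over $\R$, which it is since $1-\sqrt{-1}\neq0$ and the matrix relating $(X_2,X_3)$ to $(X_2+X_3,(1-\sqrt{-1})(X_2-\sqrt{-1}X_3))$ has nonzero determinant), and that the subsequent rescaling to $Y_2,Y_3$ indeed puts the equation in the advertised form. This is entirely parallel to the computation already carried out in Theorem~\ref{des:curve}, so there is no genuine obstacle; the proof is a routine case-by-case expansion, which is why the statement says ``by using a similar argument as above.'' I would simply remark that the table entries for $K^{0,4}_2$ and $H^4_2$ (resp.\ $K^{0,2}_2$ and $H^2_2$, resp.\ $K^{0,0}_2$ and $H^0_4$) coincide because an $I_t$ block and an absent $\xi I_s$ block contribute identically, consistent with the fact that these cocycles agree when $d$ is odd.
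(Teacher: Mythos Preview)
Your proposal is correct and follows essentially the same approach as the paper: apply Lemma~\ref{inv_ring} block-by-block to compute the invariant ring of each real structure on $\C[X_0,\dots,X_3]$, then substitute to obtain the defining equation over $\R$. Aside from a harmless slip in the first paragraph (you list $H^2_2$ among the ``diagonal'' cocycles but then correctly treat it in the one-transposition case), your case division, choice of coordinate changes, and handling of $L_2$ all match the paper's proof.
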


\begin{proof}
  The real form corresponding to $L_2$ is already described above. 
  First, we consider the real form corresponding to $K_2^{k, l}$ with
  $k + l = 4$. 
  The real form corresponding to $K_2^{0, 4}$ is clear. 

  By Lemma \ref{inv_ring}, the invariant ring of the $\R$-homomorphism
  $\alpha_1 \colon \C [ X_0, X_1, X_2, X_3 ] \to \C [ X_0, X_1, X_2, X_3]$
  that sends $X_0$ to $\xi_d X_0$, $X_i$ to $X_i$ for $i = 1, 2, 3$, and
  $c \in \C$ to $\overline{c}$
  is $\R [ \xi_{2 d} X_0, X_1, X_2, X_3 ]$. Therefore, by setting \[
    Y_0 := \xi_{2 d} X_0,\ Y_i := X_i \ ( i = 1, 2, 3 ),
  \]
  we see that the real form corresponding to $K_2^{1, 3}$ is \[
    \left( - Y_0^d + Y_1^d + Y_2^d + Y_3^d = 0 \right) \subset \P_\R^3.
  \]

  Also, by Lemma \ref{inv_ring}, the invariant ring of the
  $\R$-homomorphism
  $\alpha_2 \colon \C [ X_0, X_1, X_2, X_3 ] \to \C [ X_0, X_1, X_2, X_3]$
  that sends $X_i$ to $\xi_d X_i$ for $i = 0, 1$, $X_j$ to $X_j$ for
  $j = 2, 3$, and $c \in \C$ to $\overline{c}$
  is $\R [ \xi_{2 d} X_0, \xi_{2 d} X_1, X_2, X_3 ]$. Therefore, by
  setting \[
    Y_i := \xi_{2 d} X_i\ ( i = 0, 1),\ Y_j := X_j \ ( j = 2, 3 ),
  \]
  we see that the real form corresponding to $K_2^{2, 2}$ is \[
    \left( - Y_0^d - Y_1^d + Y_2^d + Y_3^d = 0 \right) \subset \P_\R^3.
  \]

  Next, we consider the real form corresponding to $K_2^{k, l}$ with
  $k + l = 2$. 

  By Lemma \ref{inv_ring}, the invariant ring of the $\R$-homomorphism
  $\beta_1 \colon \C [ X_0, X_1, X_2, X_3 ] \to \C [ X_0, X_1, X_2, X_3]$
  that sends $X_i$ to $X_i$ for $i = 0, 1$, $X_2$ to $X_3$, $X_3$ to
  $X_2$, and $c \in \C$ to $\overline{c}$
  is $\R [ X_0, X_1, X_2 + X_3, \left( 1 - \sqrt{- 1} \right) \left( X_2
  - \sqrt{- 1} X_3 \right) ]$. Therefore, by setting \[
    Y_i := X_i \ ( i = 0, 1 ),\ {Y'}_2 := X_2 + X_3, {Y'}_3 := \left(
    1 - \sqrt{- 1} \right) \left( X_2 - \sqrt{- 1} X_3 \right)
  \]
  and also, by setting \[
    Y_2 := \frac{{Y'}_2}{2},\ Y_3 := \frac{{Y'}_3 - {Y'}_2}{2}, 
  \]
  we see that the real form corresponding to $K_2^{1, 3}$ is \[
    \left( Y_0^d + Y_1^d + \left( Y_2 + \sqrt{- 1} Y_3 \right)^d +
    \left( Y_2 - \sqrt{- 1} Y_3 \right)^d = 0 \right) \subset \P_\R^3.
  \]

  Also, by Lemma \ref{inv_ring}, the invariant ring of the
  $\R$-homomorphism $\beta_2 \colon \C [ X_0, X_1, X_2, X_3 ] \to \C
  [ X_0, X_1, X_2, X_3]$ that sends $X_0$ to $\xi_d X_0$, $X_1$ to $X_1$,
  $X_2$ to $X_3$, $X_3$ to $X_2$, and $c \in \C$ to $\overline{c}$
  is $\R [ \xi_{2 d} X_0, X_1, X_2 + X_3, \left( 1 - \sqrt{- 1} \right)
  \left( X_2 - \sqrt{- 1} X_3 \right) ]$. Therefore, by setting \[
    Y_0 := \xi_{2 d} X_0,\ Y_1 := X_1 ,\ {Y'}_2 := X_2 + X_3, {Y'}_3 :=
    \left( 1 - \sqrt{- 1} \right) \left( X_2 - \sqrt{- 1} X_3 \right)
  \]
  and also, by setting \[
    Y_2 := \frac{{Y'}_2}{2},\ Y_3 := \frac{{Y'}_3 - {Y'}_2}{2}, 
  \]
  we see that the real form corresponding to $K_2^{1, 3}$ is \[
    \left( - Y_0^d + Y_1^d + \left( Y_2 + \sqrt{- 1} Y_3 \right)^d +
    \left( Y_2 - \sqrt{- 1} Y_3 \right)^d = 0 \right) \subset \P_\R^3.
  \]

  Finally, we consider the real form corresponding to $K_2^{0, 0}$. 
  By Lemma \ref{inv_ring}, the invariant ring of the $\R$-homomorphism
  $\gamma \colon \C [ X_0, X_1, X_2, X_3 ] \to \C [ X_0, X_1, X_2, X_3]$
  that sends $X_i$ to $X_{i + 1}$, $X_{i + 1}$ to $X_i$ for $i = 0, 2$,
  and $c \in \C$ to $\overline{c}$
  is $\R [ X_0 + X_1, \left( 1 - \sqrt{- 1} \right) \left( X_0 -
  \sqrt{- 1} X_1 \right), X_2 + X_3, \left( 1 - \sqrt{- 1} \right)
  \left( X_2 - \sqrt{- 1} X_3 \right) ]$. Therefore, by setting \[
    {Y'}_i := X_i + X_{i + 1}, {Y'}_{i + 1} := \left( 1 - \sqrt{- 1}
    \right) \left( X_i - \sqrt{- 1} X_{i + 1} \right) \ ( i = 0, 2 )
  \]
  and also, by setting \[
    Y_i := \frac{{Y'}_i}{2},\ Y_{i + 1} := \frac{{Y'}_{i + 1} -
    {Y'}_i}{2} \ ( i = 0, 2 ),
  \]
  we see that the corresponding real form is \[
    \left( \left( Y_0 + \sqrt{- 1} Y_1 \right)^d + \left( Y_0 - \sqrt{- 1}
    Y_1 \right)^d + \left( Y_2 + \sqrt{- 1} Y_3 \right)^d + \left( Y_2 -
    \sqrt{- 1} Y_3 \right)^d = 0 \right) \subset \P_\R^3.
  \]
\end{proof}

In the same way, we can extend it to the general case. In particular, all
real forms of $F_d^n$ can be described as the $n$-dimensional hypersurface
of degree $d$ except for the real form corresponding to $L_n$.

Next, we will give a topological description of a real locus of these
real forms. 

\begin{thm}
  \begin{enumerate}
    \item The real locus of $Z^{2 k}_d$ with even $d$ is $\emptyset$ for
      all $k$ and $d$.
      \label{rl:l}
    \item The real locus of \[
        ( X_0^d + \cdots + X_{n + 1}^d = 0 )
      \]
      with $d$ even is $\emptyset$. 
      \label{rl:empty}
    \item The real locus of \[
        ( X_0^d + \cdots + X_n^d - X_{n + 1}^d = 0 )
      \]
        with $d$ even is homeomorphic to $S^n$. 
      \label{rl:-1}
    \item The real locus of \[
        ( X_0^d + \cdots + X_i^d - X_{i + 1}^d - \cdots
        - X_{n + 1}^d = 0 )
      \]
      with $d$ even is homeomorphic to $\P^i ( \R ) \times \P^{n - i}
      ( \R )$. 
      \label{rl:-r}
    \item The real locus of
      \[
        \left( \sum_{k = 0}^{i - 1} X_k^d + \sum_{l = 0}^{\frac{n - i}{2}}
        \left( \left( X_{i + 2 l} + \sqrt{- 1} X_{i + 2 l + 1} \right)^d
        + \left( X_{i + 2 l} - \sqrt{- 1} X_{i + 2 l + 1} \right)^d
        \right) = 0 \right)
      \]
      with $d$ odd and $i > 0$ is homeomorphic to $\P^n ( \R )$. 
      \label{rl:most_odd}
    \item The real locus of \[
        ( X_0^d + \cdots + X_{n - 1}^d + ( X_n + \sqrt{- 1}
        X_{n + 1} )^d + ( X_n - \sqrt{- 1} X_{n + 1} )^d = 0 )
      \]
      with $d$ even is homeomorphic to $\underbrace{S^n \sqcup \cdots
      \sqcup S^n}_{\frac{d}{2}}$. 
      \label{rl:even1}
  \end{enumerate}
  \label{rl:gen}
\end{thm}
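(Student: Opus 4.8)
The plan is to treat each item by passing to an affine chart (or a union of charts covering the real points) of the given real projective hypersurface and analyzing the defining equation as a real polynomial identity, keeping careful track of the sign pattern. For (\ref{rl:l}) and (\ref{rl:empty}), the key observation is that when $d$ is even and all coefficients are $+1$, the real affine equation $\sum x_i^d = 0$ forces every $x_i = 0$, which is not a projective point; for $Z^{2k}_d$ one additionally recalls from Section 5 that it embeds in $\widetilde{\P^{n+1}}$, whose real locus is already known (via \cite{Ben16}) to contain no real points in the relevant locus, or equivalently one rewrites $\rho'$-invariance and sees the same positivity obstruction. For (\ref{rl:-1}), after dehomogenizing by $X_{n+1}$ the real locus is $\{x_0^d + \cdots + x_n^d = 1\}$, which for even $d$ is the boundary of a convex body in $\R^{n+1}$, hence homeomorphic to $S^n$ by radial projection; I would also note this hypersurface has no points at infinity in the real locus (again by positivity), so the affine picture is the whole real locus.

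For (\ref{rl:-r}), I would use the substitution $u_k = X_k^{d/2}$ (legitimate on the real locus after fixing signs chart-by-chart, or more cleanly: the map $[X]\mapsto[\,\dots X_k^{d/2}\dots\,]$ is not quite well-defined, so instead I would argue directly) to reduce to the quadric $\sum_{k<i} u_k^2 = \sum_{l\ge i} u_l^2$, whose real locus is classically $\P^{i-1}(\R)\times\P^{n-i}(\R)$; the honest version is to show the real locus of the degree-$d$ hypersurface deformation-retracts onto, or is a fiber bundle over, the quadric case, using that $t\mapsto t^{d/2}$ on the nonnegative reals is a homeomorphism $[0,\infty)\to[0,\infty)$. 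For (\ref{rl:most_odd}), the crucial point is that $d$ is odd, so $(a+\sqrt{-1}b)^d+(a-\sqrt{-1}b)^d = 2\,\mathrm{Re}\big((a+\sqrt{-1}b)^d\big)$ is a real polynomial in $a,b$ that vanishes on a union of $d$ lines through the origin in the $(a,b)$-plane and, crucially, takes both signs; combined with at least one genuine $+X_k^d$ term (here $i>0$ is used) and $d$ odd making $X_k\mapsto X_k^d$ a homeomorphism of $\R$, one shows the equation can be solved for one coordinate in terms of the others, exhibiting the real locus as a graph over $\P^n(\R)$.

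For (\ref{rl:even1}), the interesting case, I would dehomogenize and write the real locus, in the chart where the "special" pair is nonzero, using polar-type coordinates on the last two variables: with $X_n = r\cos\theta$, $X_{n+1}=r\sin\theta$ one gets $(X_n+\sqrt{-1}X_{n+1})^d + (X_n-\sqrt{-1}X_{n+1})^d = 2r^d\cos(d\theta)$, so the equation becomes $x_0^d+\cdots+x_{n-1}^d + 2r^d\cos(d\theta)=0$; since $d$ is even and the first $n$ terms are $\ge 0$, real solutions require $\cos(d\theta)\le 0$, i.e. $\theta$ lies in one of $d/2$ disjoint arcs, and on each such arc the same convexity/radial-projection argument as in (\ref{rl:-1}) identifies the corresponding piece of the real locus with $S^n$; one then checks these $d/2$ pieces are the connected components and that the remaining charts contribute nothing new (positivity again kills the locus where $r=0$). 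The main obstacle I expect is (\ref{rl:even1}): making rigorous that the $d/2$ arcs yield exactly $d/2$ \emph{closed} components each homeomorphic to $S^n$ — in particular checking what happens at the endpoints $\cos(d\theta)=0$ and ensuring the pieces do not meet — requires a genuine (if elementary) topological argument rather than the purely algebraic positivity observations that suffice for the other items; the odd-$d$ case (\ref{rl:most_odd}) is the second-trickiest point, since one must verify that the graph description is valid globally over $\P^n(\R)$ and not merely locally.
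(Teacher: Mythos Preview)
Your plan matches the paper's proof closely for items (\ref{rl:l}), (\ref{rl:empty}), (\ref{rl:-1}), (\ref{rl:most_odd}), and (\ref{rl:even1}): the paper, like you, observes $\tilde{\rho'}$ has no fixed points for (\ref{rl:l}), uses positivity for (\ref{rl:empty}), uses radial projection $[x_0:\cdots:x_n:1]\mapsto (x_0,\ldots,x_n)/\|x\|$ for (\ref{rl:-1}), projects away the coordinate $X_0$ for (\ref{rl:most_odd}) (your ``graph'' description is exactly this, and your global well-definedness worry is handled by the one-line check that $(x_1,\ldots,x_{n+1})\neq 0$ on the real locus), and for (\ref{rl:even1}) projects to $[X_n:X_{n+1}]\in\P^1(\R)$, identifies the image as the $d/2$ closed arcs where $\cos(d\theta)\le 0$, and writes down an explicit continuous bijection from each preimage to $S^n$. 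In each case the paper simply invokes ``continuous bijection from compact to Hausdorff'' to conclude homeomorphism, so your worry about (\ref{rl:even1}) needing a delicate argument is unfounded: the explicit formula $[x_0:\cdots:x_{n-1}:\cos\theta:\sin\theta]\mapsto (x_0,\ldots,x_{n-1},\sin d\theta)/\|\cdot\|$ already handles the endpoints $\cos(d\theta)=0$ cleanly.

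The one place you diverge is (\ref{rl:-r}), and there you are overcomplicating. The paper does not reduce to the quadric at all; it simply writes down the projection
\[
[x_0:\cdots:x_{n+1}]\ \longmapsto\ \bigl([x_0:\cdots:x_i],\,[x_{i+1}:\cdots:x_{n+1}]\bigr)
\]
and checks directly that it is a continuous bijection $Z\to\P^i(\R)\times\P^{n-i}(\R)$: both blocks are nonzero on $Z$ by positivity, and given a target pair the equation $\lambda^d\sum_{k\le i} a_k^d=\mu^d\sum_{l>i} b_l^d$ pins down the relative scale $|\lambda/\mu|$ uniquely since both sums are strictly positive. Your proposed route via $u_k=X_k^{d/2}$ or a deformation retract can be made to work (e.g.\ using the odd homeomorphism $t\mapsto\operatorname{sgn}(t)|t|^{d/2}$ coordinatewise, which does descend to $\P^{n+1}(\R)$), but it is strictly more effort than the paper's one-line bijection.
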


\begin{proof}
  In each case, let $Z$ be the real locus. 

  (\ref{rl:l}) As the restriction of $\rho'$ to $F^{2 k}_{d, \C}$ has
  no fixed real points, the real locus of $Z^{2 k}_d$ is $\emptyset$. 

  (\ref{rl:empty}) is clear. 

  (\ref{rl:-1}) Note that $[ x_0 : \cdots : x_{n + 1} ] \in Z$ implies
  $x_{n + 1} \neq 0$. Therefore, we can define a continuous map $Z \to
  S^n$ as follows: \[
    [ x_0 : \cdots : x_n : 1 ] \mapsto \frac{1}{\sqrt{x_0^2 +
    \cdots + x_n^2}} ( x_0, \ldots, x_n ).
  \]
  As it is a bijection, it is also a homeomorphism, and thus,
  $Z \approx S^n$. 

  (\ref{rl:-r}) Note that $[ x_0 : \cdots : x_{n + 1} ] \in Z$ implies
  $( x_0, \ldots, x_i ) \neq ( 0, \ldots, 0 )$, $( x_{i + 1}, \ldots,
  x_{n + 1}) \neq ( 0, \ldots, 0 )$. Thus, we can define a continuous map
  $Z \to \P^i ( \R ) \times \P^{n - i} ( \R )$ as follows: \[
    [ x_0 : \cdots : x_{n + 1} ] \mapsto ( [ x_0 : \cdots : x_i ],
    [ x_{i + 1} : \cdots : x_{n + 1} ] ).
  \]
  As it is a bijection, it is also a homeomorphism, and thus, 
  $Z \approx \P^i ( \R ) \times \P^{n - i} ( \R )$. 

  (\ref{rl:most_odd}) Note that $[ x_0 : \cdots : x_{n + 1} ] \in Z$
  implies $( x_1, \ldots, x_{n + 1} ) \neq ( 0, \ldots, 0 )$. Therefore,
  we can define a continuous function $Z \to \P^n ( \R )$ as follows: \[
    [ x_0 : \cdots : x_{n + 1} ] \mapsto [ x_1 : \cdots : x_{n + 1} ].
  \]
  As $d$ is odd, this is a bijection, and thus, a homeomorphism.
  So $Z \approx \P^n ( \R )$. 

  (\ref{rl:even1}) First, note that $[ x_0 : \cdots : x_{n + 1} ] \in Z$
  implies $( x_n, x_{n + 1} ) \neq ( 0, 0 )$. Therefore, we can define
  a continuous map $Z \to \P^i ( \R )$ as follows: \[
    [ x_0 : \cdots : x_{n + 1} ] \mapsto [ x_n : x_{n + 1} ].
  \]
  Then, this image has $\frac{d}{2}$ connected components \[
    E_m := \left\{ [ \cos \theta : \sin \theta ] \bigg| \frac{( 4 m - 1 )
    \pi}{2 d} \le \theta \le \frac{( 4 m + 1 ) \pi}{2 d} \right\}
  \]
  for $0 \le m \le \frac{d}{2} - 1$. 

  In fact, $[ x_n : x_{n + 1} ] \in f ( Z )$ if and only if
  $( x_n + \sqrt{- 1} x_{n + 1})^d + ( x_n - \sqrt{- 1} x_{n + 1} )^d
  \le 0$, and \[
    ( \cos \theta + \sqrt{- 1} \sin \theta )^d + ( \cos \theta -
    \sqrt{- 1} \sin \theta )^d = 2 \cos d \theta. 
  \]
  Also, for each $m$, we can define a continuous map $f^{- 1} ( E_m )
  \to S^n$ defined as follows: \[
    [ x_0 : \cdots : x_{n - 1} : \cos \theta : \sin \theta ] \mapsto
    \frac{1}{\sqrt{x_0^2 + \cdots x_{n - 1}^2 + \sin d \theta}}
    ( x_0, \ldots, x_{n - 1}, \sin d \theta )
  \]
  Note that this is well-defined as $d$ is even. 
  As this is a bijection, it is also a homeomorphism, and thus
  $Z \approx \underbrace{S^n \sqcup \cdots \sqcup S^n}_{\frac{d}{2}}$. 
\end{proof}

In particular, we determined the real locus of real forms for the curve
cases:

\begin{thm}
  \begin{enumerate}
    \item Suppose $d$ is odd. Then, the real loci of \[
        ( X_0^d + X_1^d + X_2^d = 0 ),\ ( X_0^d + ( X_1 + \sqrt{- 1}
        X_2 )^d + ( X_1 - \sqrt{- 1} X_2 )^d = 0 )
      \]
      are both homeomorphic to $\P^n ( \R )$. 
    \item Suppose $d$ is even. Then,
      \begin{itemize}
        \item the real locus of $\displaystyle ( X_0^d + X_1^d + X_2^d
          = 0 )$ is $\emptyset$. 
        \item the real locus of $\displaystyle ( - X_0^d + X_1^d + X_2^d
          = 0 )$ is homeomorphic to $S^1$. 
        \item the real locus of $\displaystyle ( X_0^d + ( X_1 +
          \sqrt{- 1} X_2 )^d + ( X_1 - \sqrt{- 1} X_2 )^d = 0 )$
          is homeomorphic to $\underbrace{S^1 \sqcup \cdots
          \sqcup S^1}_{\frac{d}{2}}$. 
      \end{itemize}
  \end{enumerate}
  \label{rl:curve}
\end{thm}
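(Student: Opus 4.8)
The plan is to deduce Theorem~\ref{rl:curve} from the general computation in Theorem~\ref{rl:gen} by specializing to $n=1$ and matching each real form of $F^1_d$ listed in Theorem~\ref{des:curve} with the appropriate case of Theorem~\ref{rl:gen}, handling separately the one real form that is not literally covered there.

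Concretely, I would proceed case by case. For $d$ even the three real forms of $F^1_d$ are $(X_0^d+X_1^d+X_2^d=0)$, $(-X_0^d+X_1^d+X_2^d=0)$, and $(X_0^d+(X_1+\sqrt{-1}X_2)^d+(X_1-\sqrt{-1}X_2)^d=0)$. The first is case~(\ref{rl:empty}) of Theorem~\ref{rl:gen} with $n=1$, so its real locus is empty. The second has the same vanishing locus as $(X_0^d-X_1^d-X_2^d=0)$, which is case~(\ref{rl:-r}) with $n=1$, $i=0$ (equivalently, after the relabelling $Y_0=X_1$, $Y_1=X_2$, $Y_2=X_0$, case~(\ref{rl:-1}) with $n=1$), so its real locus is homeomorphic to $\P^0(\R)\times\P^1(\R)\approx S^1$. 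The third is case~(\ref{rl:even1}) with $n=1$, so its real locus is a disjoint union of $d/2$ copies of $S^1$. For $d$ odd, the twisted form $(X_0^d+(X_1+\sqrt{-1}X_2)^d+(X_1-\sqrt{-1}X_2)^d=0)$ is case~(\ref{rl:most_odd}) with $n=1$, $i=1$, so its real locus is homeomorphic to $\P^1(\R)$ (which is what ``$\P^n(\R)$'' means in the curve case $n=1$, and is itself homeomorphic to $S^1$).

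The only form left is the untwisted Fermat curve $(X_0^d+X_1^d+X_2^d=0)\subset\P^2_\R$ with $d$ odd, whose real locus $Z$ is not among the cases of Theorem~\ref{rl:gen} because case~(\ref{rl:most_odd}) there requires $i>0$; for it I would argue directly. If $[x_0:x_1:x_2]\in Z$ then $(x_1,x_2)\neq(0,0)$, since otherwise $x_0^d=0$ as well, which is impossible in $\P^2$; hence $[x_0:x_1:x_2]\mapsto[x_1:x_2]$ defines a continuous map $Z\to\P^1(\R)$. Because $d$ is odd, $t\mapsto t^d$ is a bijection of $\R$, so each $[x_1:x_2]$ has a unique preimage: choose a representative $(x_1,x_2)$ and set $x_0:=(-(x_1^d+x_2^d))^{1/d}$, the unique real $d$-th root; rescaling $(x_1,x_2)$ by $\lambda$ rescales this $x_0$ by $\lambda$, so $[x_0:x_1:x_2]$ is well defined and the map is a bijection. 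Since $Z$ is closed in the compact Hausdorff space $\P^2(\R)$, this continuous bijection onto $\P^1(\R)$ is a homeomorphism, giving $Z\approx\P^1(\R)$ and completing part~(1). There is no serious obstacle: everything reduces to Theorem~\ref{rl:gen} except this single elementary computation, and the only points needing care are the bookkeeping that matches $(-X_0^d+X_1^d+X_2^d=0)$ with the correct case of Theorem~\ref{rl:gen} and the well-definedness check for the odd untwisted curve.
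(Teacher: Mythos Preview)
Your proposal is correct and matches the paper's approach exactly: Theorem~\ref{rl:curve} is stated in the paper as an immediate specialization of Theorem~\ref{rl:gen} to $n=1$, with no separate proof given. Your one extra step---the direct projection argument for the untwisted odd Fermat curve---is in fact just the $i=n+2$ instance of case~(\ref{rl:most_odd}) (the second sum is empty and the condition $i>0$ is satisfied), so the same projection $[x_0:x_1:x_2]\mapsto[x_1:x_2]$ from the proof of~(\ref{rl:most_odd}) already covers it.
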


\begin{remark}
  By considering real loci, we see that $F^2_4$ has at least $4$
  non-isomorphic real forms
  \begin{itemize}
    \item $\displaystyle ( X_0^4 + X_1^4 + X_2^4 + X_3^4 = 0 )$,
    \item $\displaystyle ( - X_0^4 + X_1^4 + X_2^4 + X_3^4 = 0 )$,
    \item $\displaystyle ( - X_0^4 - X_1^4 + X_2^4 + X_3^4 = 0 )$, and
    \item $\displaystyle ( X_0^4 + X_1^4 + ( X_2 + \sqrt{- 1} X_3 )^4 +
      ( X_2 - \sqrt{- 1} X_3 )^4 = 0 )$.
  \end{itemize}
  Note that $F^2_4$ is a K3 surface and thus has only finitely many
  real forms up to isomorphisms by \cite{CF20}. However, unlike the other
  cases $d \ge 3$, the automorphism group of $F^2_4$ is still unknown,
  and it is known to be a very difficult problem to determine the
  automorphism group of $F^2_4$.
  For this reason, it is also a very challenging problem to determine all
  real forms of $F^2_4$ or to see at least if there is a real form of
  $F^2_4$ other than these four or not.
  \label{f2_4}
\end{remark}

\medskip\noindent \textbf{Acknowledgements.} I thank Professor J\'{a}nos
Koll\'{a}r for pointing out some crucial mistakes and also, giving some
advice. 

\bibliographystyle{alpha}

\end{document}